\numberwithin{equation}{section}
\theoremstyle{definition}
\newtheorem{definition}{Definition}[section] 
\newtheorem*{definition*}{Definition} 
\theoremstyle{plain}
\newtheorem{lemma}{Lemma}[section]
\newtheorem*{lemma*}{Lemma} 
\newtheorem{proposition}{Proposition}[section]
\newtheorem*{proposition*}{Proposition} 
\newtheorem*{theorem*}{Theorem} 
\newtheorem*{maintheorem}{Main Theorem} 
\newtheorem*{corollary*}{Corollary} 
\newtheorem*{consequence*}{Consequence} 
\newtheorem*{conjecture*}{Conjecture} 
\theoremstyle{definition}
\newtheorem{remark}{Remark}[section]
\newtheorem*{remark*}{Remark} 
\newtheorem{example}{Example}[section]
\newtheorem*{example*}{Example} 
\newtheorem*{question*}{Question} 
\newtheorem*{exercise*}{Exercise} 
\theoremstyle{definition}
\newtheorem*{déf*}{Définition} 
\theoremstyle{plain}
\newtheorem*{lem*}{Lemme} 
\newtheorem*{prop*}{Proposition} 
\newtheorem*{thm*}{Théorème} 
\newtheorem*{cor*}{Corollaire} 
\newtheorem*{csq*}{Conséquence} 
\newtheorem*{conj*}{Conjecture} 
\theoremstyle{definition}
\newtheorem*{rem*}{Remarque} 
\newtheorem*{ex*}{Exemple} 
\newtheorem*{que*}{Question} 
\newtheorem*{exo*}{Exercice} 
\theoremstyle{remark}
\newcommand{\disp}{\displaystyle}
\renewcommand{\.}{{}_{\!}} 
\renewcommand{\a}{\alpha}
\renewcommand{\c}{\gamma}
\newcommand{\e}{\varepsilon}
\newcommand{\f}{\varphi}
\renewcommand{\l}{\lambda}
\newcommand{\Om}{\Omega}
\newcommand{\s}{\sigma}
\newcommand{\cT}{\mathcal{T}}
\newcommand{\as}{\ \mbox{\raisebox{.085ex}{$:$}$\! =$} \ \;\!\.} 
\let\oldexists\exists
\renewcommand{\exists}{\oldexists \,} 
\let\oldforall\forall
\renewcommand{\forall}{\oldforall \,} 
\newcommand{\imp}{\ \Longrightarrow \ } 
\newcommand{\con}{\ \Longleftarrow \ } 
\newcommand{\st}{~|~} 
\newcommand{\ie}{\emph{i.\,e.\;}} 
\newcommand{\Oset}{\mbox{\large $\varnothing$}} 
\newcommand{\llist}[3]{#1_{#2} , \ldots , #1_{#3}} 
\newcommand{\inc}{\subseteq} 
\newcommand{\cni}{\supseteq} 
\newcommand{\setmin}{\,\! \raisebox{0.40ex}{$\smallsetminus$} \;\!\!} 
\newcommand{\cart}{\! \times \!} 
\newcommand{\1}[1]{\mathbf{1}_{\scriptscriptstyle \! #1 \!}} 
\newcommand{\rest}[2]{#1_{\mathbf{|} #2}} 
\newcommand{\inv}[1]{#1^{- \! 1}} 
\renewcommand{\to}{\longrightarrow} 
\newcommand{\seqN}[2]{\left( #1_{#2} \right)_{\! #2 \geq 0}} 
\newcommand{\seq}[3]{\left( #1 \right)_{\! #2 \geq #3}} 
\newcommand{\NN}{\mathbf{N}} 
\newcommand{\RR}{\mathbf{R}} 
\newcommand{\Rn}[1]{\mathbf{R}^{\! #1 \!}} 
\newcommand{\intr}[1]{\overset{\ _{\circ}}{#1}} 
\newcommand{\ri}[1]{\mathrm{ri} \! \left( #1 \right)} 
\newcommand{\clos}[1]{\, \overline{\! #1}} 
\newcommand{\rc}[1]{\mathrm{rc} \! \left( #1 \right)} 
\newcommand{\bd}[1]{\partial #1} 
\newcommand{\rb}[1]{\mathrm{rb} \! \left( #1 \right)} 
\renewcommand{\leq}{\leqslant} 
\renewcommand{\geq}{\geqslant} 
\newcommand{\Epi}[1]{\mathrm{Epi} \! \left( #1 \right)} 
\newcommand{\Epis}[1]{\mathrm{Epi_{s \,}} \!\! \left( #1 \right)} 
\let\oldint\int
\renewcommand{\int}[4]{\oldint_{\! #1}^{#2}{\!\!\!\! #3 \mathrm{d} #4}} 
\newcommand{\cLL}[1]{\mathcal{L}^{\mbox{\raisebox{.5ex}{$\scriptstyle #1$}} \!}} 
\newcommand{\goes}{\rightarrow} 
\newcommand{\lvec}[3]{\left( #1_{#2} , \ldots , #1_{#3} \right)} 
\renewcommand{\Im}[1]{\mathrm{Im} \! \left( #1 \right)} 
\renewcommand{\dim}[1]{\mathrm{dim} \! \left( #1 \right)} 
\newcommand{\Vect}[1]{\mathrm{Vect} \! \left( #1 \right)} 
\let\olddet\det
\renewcommand{\det}[1]{\olddet{\! \left( #1 \right)}} 
\newcommand{\Aff}[1]{\mathrm{Aff} \! \left( #1 \right)} 
\newcommand{\Conv}[1]{\mathrm{Conv} \! \left( #1 \right)} 
\newcommand{\twonorm}[1]{\left\| #1 \right\|_{_{\! 2}}} 
\newcommand{\scal}[2]{\left\langle #1 , #2 \right\rangle} 
\newcommand{\ppdereq}[2]{\frac{\partial^{2 \!} #1}{\partial #2^{2}}} 
\newcommand{\Cl}[1]{\mathrm{C}^{\mbox{\raisebox{.5ex}{$\scriptstyle #1$}} \!\!}} 
\begin{document}

\title[]{Functions with strictly convex epigraph}

\author{St\'{e}phane Simon}
\address{St\'{e}phane Simon, 
UMR 5127 du CNRS \& Universit\'{e} de Savoie, 
Laboratoire de math\'{e}matique, 
Campus scientifique, 
73376 Le Bourget-du-Lac Cedex, 
France}
\email{Stephane.Simon@univ-savoie.fr}

\author{Patrick Verovic}
\address{Patrick Verovic, 
UMR 5127 du CNRS \& Universit\'{e} de Savoie, 
Laboratoire de math\'{e}matique, 
Campus scientifique, 
73376 Le Bourget-du-Lac Cedex, 
France}
\email{Patrick.Verovic@univ-savoie.fr}

\date{\today}
\subjclass[2010]{Primary: 52A07, Secondary: 52A05}
\keywords{Convexity, affine geometry, strict convexity, topological vector spaces}


\begin{abstract}
The aim of this paper is to provide a complete and simple characterization of functions with domain 
in a topological real vector space whose epigraph is strictly convex. 
\end{abstract}

\maketitle

\bigskip
\bigskip


\section*{Introduction} 

This paper is concerned with the relationship between strict convexity of functions defined 
over a domain in a topological real vector space and strict convexity of their epigraphs. 
A subset of a topological real vector space is said to be strictly convex if it is convex 
and if in addition there is no non-trivial segment in its boundary. 
This notion will be made more precise in Section~\ref{sec:preliminaries}. 

\bigskip

Even though strict convexity is less studied than convexity in the literature, there are nevertheless 
many fields where strict convexity of a subset of a topological real vector space is used. 
We give here three different examples which illustrate this geometric property. 

\bigskip

The first example, which is actually the starting point of the present work, 
concerns epigraphs of functions. 
It is a well-known result that a function has a convex epigraph if and only if it is convex. 
This is a bridge between geometric convexity and analytic convexity 
(see for example~\cite[Theorem~4.1, page~25]{Roc70}). 
Therefore, a natural question is to know whether the same equivalence holds 
when replacing ``convexity" by ``strict convexity". To the best of our knowledge, 
nothing has been studied about this issue in the litterature, 
even in the case when the domain of $\. f \.$ lies in $\Rn{n}$. 
This is why we propose to fill the gap in the present paper, 
not only in $\Rn{n}$ but in the general framework of topological real vector spaces. 
This is done in the Main~Theorem that we state in Section~\ref{sec:main-thm}. 

\bigskip

The second example deals with strict convexity of the unit ball in a normed real vector space 
(in that case, the norm itself is sometimes called strictly convex, which is unfortunate). 
This property is equivalent to saying that there exists a real number ${p > \;\!\! 1 \.}$ 
such that the $p$-th power of the norm is a strictly convex function 
(we may see Theorem~11.1 in~\cite[page~110]{Car04}), and this is actually equivalent 
to the strict convexity of the epigraph of this function as the Main~Theorem will show. 

\smallskip

It is important to work with such norms since they yield interesting properties in functional analysis. 
For instance, given a Banach space $E$ with strictly convex unit ball, 
any non-empty family of commutative non-expansive mappings from a non-empty closed convex 
and weakly compact subset of $E$ into itself has a common fixed point (see for example~\cite{BKS68}). 

\smallskip

Nevertheless, if the unit ball of a normed vector space is not strictly convex, this may be offset 
in at least two different ways. Indeed, any reflexive Banach space can be endowed with an equivalent norm whose unit ball is strictly convex (see for example~\cite{Lin66}). 
On the other hand, any separable Banach space can be endowed with an equivalent norm which is smooth and whose unit ball is strictly convex (see for example~\cite[page~33]{JohLin01}). 

\bigskip

The third example concerns optimization problems---more precisely, the relationship between 
strict convexity and uniqueness of minimizers. 
When dealing with an optimization problem on a topological real vector space, the search for a value 
of the variable where the cost function achieves a minimum is much more easier in case this function 
and the constraint set are both convex (see for example~\cite{EkeTem99}). 
Moreover, if the cost function is strictly convex, such a minimizer is then unique. 
On the other hand, if the constraint set is both strictly convex and given by the epigraph 
of a function (we shall see in which case this is possible owing to the Main~Theorem), 
and if the cost function has no minimum over the whole space, then such a minimizer is unique too. 

\bigskip

As we may notice throughout these three examples, it is of great importance to know 
whether the epigraph of a function is strictly convex or not. 

\bigskip

Of course, for a function defined over $\Rn{n}$, the strict convexity of its epigraph is merely 
equivalent to being strictly convex. But what happens for a function with an \emph{arbitrary} domain 
which lies in an \emph{arbitrary} topological real vector space? 

\bigskip

In order to give a complete answer to this question and deduce some of its consequences 
in Section~\ref{sec:main-thm}, we shall examine two issues in Section~\ref{sec:preliminaries}: 
topological aspects of epigraphs of functions defined on any topological space on the one hand, 
and the notion of strict convexity for sets in general topological real vector spaces 
(no matter which dimension they have or whether they are Hausdorff) on the other hand. 

\bigskip

Finally, Section~\ref{sec:proofs} is devoted to the proofs of all the results 
we mention in the previous sections. 

\bigskip
\bigskip


\section{Motivations, Main Theorem and consequences} \label{sec:main-thm} 

The relationship between convexity of sets and convexity of functions 
is given by the following well-known result that is quite easy to prove. 

\bigskip

\begin{proposition} \label{prop:convex-epi} 
   Let $C \!$ be a subset of a real vector space $V \!\!$ and $\. f \. : C \to \RR$ a function. 
   Then we have the equivalence 
   $$
   C \! \ \mbox{and} \ \. f \! \ \mbox{are both convex} 
   \qquad \iff \qquad 
   {\Epi{f}} \;\!\! \ \mbox{is convex}~.
   $$ 
\end{proposition}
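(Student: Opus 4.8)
The plan is to verify both implications directly from the definition of the epigraph, $\Epi{f} = \{(x,t) \in C \times \RR : t \geq f(x)\}$, using nothing more than the defining inequality of convexity for $f$ and closure under convex combinations for $C$; the whole argument is elementary.

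For the implication ``$\Rightarrow$'' I would take two points $(x_0,t_0)$ and $(x_1,t_1)$ in $\Epi{f}$ together with a scalar $\l \in [0,1]$, and show that $\l (x_0,t_0) + (1-\l)(x_1,t_1)$ again lies in $\Epi{f}$. Its first coordinate $\l x_0 + (1-\l) x_1$ belongs to $C$ since $C$ is convex, and its second coordinate satisfies
$$
\l t_0 + (1-\l) t_1 \ \geq\ \l f(x_0) + (1-\l) f(x_1) \ \geq\ f\bigl(\l x_0 + (1-\l) x_1\bigr),
$$
where the first inequality uses $t_i \geq f(x_i)$ and the second uses convexity of $f$. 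Hence $\Epi{f}$ is convex.

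For the implication ``$\Leftarrow$'' I would first recover the convexity of $C$: the map $x \mapsto (x,f(x))$ sends $C$ into $\Epi{f}$, and $C$ is exactly the image of $\Epi{f}$ under the linear projection $V \times \RR \to V$, so convexity of $\Epi{f}$ forces convexity of $C$. Only once $C$ is known to be convex is it meaningful to ask whether $f$ is convex; then, for $x_0, x_1 \in C$ and $\l \in [0,1]$, the points $(x_0,f(x_0))$ and $(x_1,f(x_1))$ lie in $\Epi{f}$, so their convex combination $\bigl(\l x_0 + (1-\l) x_1,\ \l f(x_0) + (1-\l) f(x_1)\bigr)$ lies in $\Epi{f}$ as well, which is precisely the inequality $f(\l x_0 + (1-\l) x_1) \leq \l f(x_0) + (1-\l) f(x_1)$.

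I do not expect a genuine obstacle. The only points that deserve a word of care are the logical ordering in the reverse direction --- convexity of the domain $C$ has to be secured before the convexity inequality for $f$ even makes sense --- and the remark that the epigraph, a priori a subset of the product $V \times \RR$, is here viewed as a convex subset of that product, so that all the convex combinations above are understood coordinatewise.
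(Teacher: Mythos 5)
Your proof is correct and is exactly the standard argument; the paper itself does not prove this proposition but states it as a well-known, easy fact (citing Rockafellar), so there is nothing to compare against beyond noting that your two directions are the expected ones. Your remark that convexity of $C$ must be recovered first (as the image of $\Epi{f}$ under the projection onto $V$) before the convexity inequality for $f$ is even well-posed is a worthwhile point of care that the paper's definition of a convex function (which presupposes a convex domain) indeed makes necessary.
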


\medskip

This is a geometric way of characterizing the convexity of a function by looking at its epigraph. 

\bigskip

Such a property naturally raises the issue of studying what happens when convexity 
is replaced by strict convexity whose meaning will be given in Definition~\ref{def:strict-cvx} 
(real vector spaces being of course replaced by 
\emph{arbitrary}---possibly non-Hausdorff---topological real vector spaces). 

\bigskip

At first sight, we may believe that for a function defined over a general topological 
real vector space, strict convexity of its epigraph is merely equivalent for the function 
to be strictly convex in the usual sense. 
But this is \emph{false} as we can observe in the following example. 

\bigskip

\begin{example} \label{ex:epi-not-strictly-cvx} 
Consider the real vector space 
$V \!\. \as \Cl{0 \,}(\RR , \RR) \cap \cLL{2}(\RR , \RR) \inc \Rn{\RR}$ 
endowed with the topology $\cT \.$ of pointwise convergence 
(this is nothning else than the product topology, which is therefore Hausdorff), 
and let $\. f \. : C \. \as V \! \to \RR$ be the function defined by 
$\. f \. (u) \. \as \!\. \twonorm{u}^{\. 2} \!$. 

\smallskip

On the one hand, $\. f \.$ is strictly convex since for any $u \in V \!$ its Hessian at $u$ 
with respect to the norm ${\twonorm{\cdot}} \!$ on $V \!$ 
is equal to ${2 \. {\scal{\cdot \,}{\cdot}} \.}$, and hence positive definite. 

\smallskip

On the other hand, whereas $\;\!\! (0 , 0) \;\!\!$ and $\;\!\! (0 , 2) \;\!\!$ 
are in the epigraph of $\. f \!$, their midpoint $\;\!\! (0 , \;\!\! 1 \:\!\. ) \;\!\!$ 
does not belong to $\ri{{\Epi{f}} \.} = \intr{\wideparen{\Epi{f}}} \.$, 
that is, ${{\Epi{f}} \;\!\!}$ is not a neighborhood of $\;\!\! (0 , \;\!\! 1 \:\!\. ) \;\!\!$ 
for the product topology on $V \. \cart \RR$ 
as we can check with the sequence ${\;\!\! {\seq{u_{n}}{n}{\. 1 \.}}}$ of $V \!$ defined by 
$$
u_{n \.}(x) \. \as \!\. \left\{ 
\begin{array}{cl} 
   2 \sqrt{n x - n^{2} \. + \;\!\! 1 \. /  \. n} & 
   \quad \mbox{for} \quad x \in [n - \;\!\! 1 \. /  \. n^{2} \! \, , \, n] \.~, \\ 
   2 \. / \! \sqrt{n} & \quad \mbox{for} \quad x \in [n \, , \, 2 n] \.~, \\ 
   2 \sqrt{2 n^{2} \. - n x + \;\!\! 1 \. /  \. n} & 
   \quad \mbox{for} \quad x \in [2 n \, , \, 2 n + \;\!\! 1 \. /  \. n^{2}] \.~, \quad \mbox{and} \\ 
   0 & \quad \mbox{for} \quad x \leq n - \;\!\! 1 \. /  \. n^{2} \! 
   \quad \mbox{or} \quad x \geq 2 n + \;\!\! 1 \. /  \. n^{2} \,\!\!\!~, 
\end{array} 
\right.
$$ 
which converges to zero with respect to $\cT$ but satisfies 
${\;\!\! (u_{n} , \;\!\! 1 \:\!\. ) \not \in {\Epi{f}} \;\!\!}$ 
for any $n \geq \;\!\! 1 \.$ since one has 
$\. f \. (u_{n}) \geq \:\! \:\!\! f \. (u_{n} \! \times \! \1{[n \, , \, 2 n] \,}) 
= \:\! 4 > \;\!\! 1$. 
This proves that $\Epi{f} \inc V \. \cart \RR$ is not strictly convex. 
\end{example}

\bigskip

Even in the Hausdorff finite-dimensional case, things are not as simple as they seem. 
Indeed, if we consider the open disc 
$C \. \as \:\!\! \{ (x , y) \in \Rn{2} \st x^{2} \! + y^{2} \. < \;\!\! 1 \:\!\. \} \.$, 
the function $\. f \. : C \to \RR$ defined by 
$\. f \. (x , y) \. \as x^{2} \! + y^{2} \!$ is strictly convex but its epigraph is not. 

\bigskip

Moreover, either in the non-Hausdorff finite-dimensional case 
or in the infinite-dimensional case, convexity does not always implies continuity. 
In contrast and among other things, we shall see that \emph{strict convexity} of the epigraph 
\emph{does always} insure continuity of the function. 

\bigskip

So, the question of finding a complete characterization of functions with domain 
in a topological real vector space whose epigraph is strictly convex does deserve our attention. 

\bigskip

This characterization is described as follows, where $\mathrm{rb}$ stands for the relative boundary 
(see Definition \ref{def:ri+rc+rb}). 

\bigskip

\begin{maintheorem} 
   Let $C \!$ be a subset of a topological real vector space $V \!\!$ and 
   ${\. f \. : C \to \RR}$ a function. 
   Then we have the following equivalence: 
   \[
   \left( \!\!\! 
   \begin{array}{c} 
      \vspace{5pt} 
      C \! \ \mbox{is convex and open in} \ {\Aff{C}} \.~, \\ 
      \vspace{5pt} 
      f \! \ \mbox{is strictly convex and continuous}~, \\ 
      \forall x_{0} \in \rb{C} \inc \:\! \clos{C} \. , \ \ \. 
      f \. (x) \ \to \ \:\! {+\infty} \quad \mbox{as} \quad x \ \to \ x_{0} 
   \end{array} \!\! 
   \right) \!\!\: 
   \ \iff \ 
   {\Epi{f}} \;\!\! \ \mbox{is strictly convex}~.
   \] 
\end{maintheorem}

\medskip

\begin{remark*} 
It is to be noticed that this equivalence is still true if ``continuous'' 
is changed into ``locally bounded from above''. 
\end{remark*}

\bigskip

The proof, that we postpone until Section~\ref{sec:proofs}, 
splits into the direct implication and its converse.

\smallskip

The direct implication is the consequence of three main facts. 
The first one is the convexity of ${{\Epi{f}} \;\!\!}$ given by the convexity of $\. f \!$. 
The second one is the closeness of the epigraph of $\. f \.$ in $\Aff{C} \cart \RR$ 
due to both the continuity of the $\. f \.$ and its behavior near the boundary of $C \.$, 
which insures that any segment whose end points are in the boundary of ${{\Epi{f}} \;\!\!}$ 
is contained in ${{\Epi{f}} \.}$. 
The third one is the property that any open segment whose endpoints 
are in the boundary of ${{\Epi{f}} \;\!\!}$ actually lies inside the interior of ${{\Epi{f}} \;\!\!}$ 
in $\Aff{C} \cart \RR$ as a result of the strict convexity of $\. f \.$ and the two previous facts. 

\smallskip

As for the converse implication, there are four main things to be used. 
The first one is the convexity of both $C \.$ and $\. f \.$ given by the convexity of ${{\Epi{f}} \.}$. 
The second one is the openness of $C \.$ in ${{\Aff{C}} \;\!\!}$ as a consequence for the epigraph 
not to contain vertical segments in its boundary. 
The third one is the fact that the interior of ${{\Epi{f}} \;\!\!}$ in $\Aff{C} \cart \RR$ lies 
inside the strict epigraph of $\. f \!$. 
The fourth one is the property for $\. f \.$ to be locally bounded on some non-empty open set in~$C \.$ 
as a result of the non-emptyness of the interior of ${{\Epi{f}} \;\!\!}$ in $\Aff{C} \cart \RR$. 
All these properties yield the continuity of $\. f \.$ and give the behavior of $\. f \.$ 
near the boundary of $C \.$. 

\bigskip

Before giving some consequences of this result, 
all of whose will also be proved in Section~\ref{sec:proofs}, let us just show on a simple example 
how it may be usefull for checking strict convexity of the epigraph of a function. 

\bigskip

\begin{example} \label{ex:epi-strictly-cvx} 
Consider the open convex subset 
${C \. \as \;\!\! {]{- \. 1} \:\!\. , \;\!\! 1 \:\!\. [} 
\cart {]{- \. 1} \:\!\. , \;\!\! 1 \:\!\. [} \;\!\!}$ 
of the topological real vector space ${V \! \as \Rn{2}}$ (with its usual topology), 
and let ${\. f \. : C \to \RR}$ be the smooth function defined by 
${\. f \. (x , y) \. 
\as \;\!\! 1 \. / \. [ \. ( \:\!\. 1 \;\!\! - x^{2}) \. ( \:\!\. 1 \;\!\! - y^{2}) \. ] \.}$. 

\smallskip

For any point ${\;\!\! (x , y) \in \:\! C \.}$, we then compute 
${\disp \ppdereq{f}{x}(x , y) 
= \:\! 2 \! \times \! \frac{1 \;\!\! + 3 x^{2}}{(1 \;\!\! - x^{2}) \. (1 \;\!\! - y^{2})} > 0}$, 
and the Hessian matrix of $\. f \.$ at $(x , y)$ has a determinant which is equal to 
$$
4 (5 x^{2} y^{2} \! + 3 y^{2} \! + 3 x^{2} \! + \;\!\! 1 \:\!\. ) \! 
/ \. [ \. (x - \;\!\! 1 \:\!\. )^{\. 4} (x + \;\!\! 1 \:\!\. )^{\. 4} 
(y - \;\!\! 1 \:\!\. )^{\. 4} (y + \;\!\! 1 \:\!\. )^{\. 4}] \ > \ 0~.
$$ 


The function $\. f \.$ is therefore strictly convex and hence the Main~Theorem 
insures that its epigraph is strictly convex since we have $\Aff{C} = \Rn{2}$ 
and $\. f \. (x , y) \to \:\! {+\infty}$ 
as $\;\!\! (x , y) \;\!\!$ converges to any point $\;\!\! (x_{0} , y_{0}) \in \bd{C} \.$. 
\end{example}

\bigskip

The first consequence of the Main~Theorem is obtained by taking $C \as V \!\.$. 

\bigskip

\begin{proposition} \label{prop:sc-intr-epi} 
   Given a strictly convex function $\. f \. : V \! \to \RR$ 
   defined on a topological real vector space $V \!\!$, we have the equivalence 
   
   \smallskip
   
   \begin{center} 
      ${{\Epi{f}} \;\!\!}$ is strictly convex 
      \qquad $\iff$ \qquad 
      ${{\Epi{f}} \;\!\!}$ has a non-empty interior in $V \! \cart \RR$~. 
   \end{center} 
\end{proposition}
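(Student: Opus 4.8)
The plan is to deduce Proposition~\ref{prop:sc-intr-epi} directly from the Main~Theorem by specializing to the case $C \as V \!$. In this situation $\Aff{C} = V \!$, so $C$ is trivially convex and open in $\Aff{C}$, the relative boundary $\rb{C}$ is empty, and hence the third condition in the left-hand side of the Main~Theorem (the behaviour near $\rb{C}$) is vacuously satisfied. Thus, under the standing hypothesis that $\. f \.$ is strictly convex, the left-hand side of the equivalence in the Main~Theorem reduces to the single requirement that $\. f \.$ be continuous. Consequently, still assuming $\. f \.$ strictly convex, the Main~Theorem gives the equivalence: $\Epi{f}$ is strictly convex $\iff$ $\. f \.$ is continuous.

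It then remains to show that, for a strictly convex $\. f \. : V \to \RR$, continuity of $\. f \.$ is equivalent to $\Epi{f}$ having non-empty interior in $V \cart \RR$. The implication ``$\. f \.$ continuous $\Rightarrow$ $\Epi{f}$ has non-empty interior'' is immediate: if $\. f \.$ is continuous then $\Epi{f}$ is closed and, more to the point, its interior contains $\{(x,t) \st t > f(x)\}$, which is non-empty (e.g. $(x_0, f(x_0)+1)$ for any $x_0 \in V$ lies in a neighbourhood on which $\. f \.$ stays below $f(x_0)+1$ by continuity). For the converse, I would invoke the remark attached to the Main~Theorem, which asserts that the equivalence still holds with ``continuous'' replaced by ``locally bounded from above''; alternatively, and more self-containedly, I would argue that $\Epi{f}$ having non-empty interior in $V \cart \RR$ means there exist $x_0 \in V$, $t_0 \in \RR$, an open neighbourhood $U$ of $x_0$ in $V$ and $\e > 0$ with $U \cart {]t_0 - \e, t_0 + \e[} \inc \Epi{f}$, which forces $f(x) \leq t_0 - \e/2$ for all $x \in U$; thus $\. f \.$ is bounded above on a non-empty open set. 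A convex function on a topological vector space that is bounded above on a non-empty open subset is continuous on the interior of its domain (a standard fact — the bound propagates by convexity and yields local Lipschitz-type control), so $\. f \.$ is continuous on $V \!$.

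The only genuinely delicate point is this last classical step — upgrading ``locally bounded above at one point'' to ``continuous everywhere'' for a convex function on an arbitrary (possibly non-Hausdorff, possibly infinite-dimensional) topological real vector space. I expect the paper to have already isolated this as a lemma in Section~\ref{sec:preliminaries} or to prove it inside Section~\ref{sec:proofs} as part of the converse direction of the Main~Theorem; if so, one simply cites it. Otherwise one reproduces the usual argument: translate so $x_0 = 0$, $f(0) \leq M$ on a balanced neighbourhood $W$; then for $x \in W$ and $\l \in {]0,1[}$, convexity gives $f(\l x) \leq \l M + (1-\l) f(0)$ and also a lower bound via $0 = \tfrac{1}{2}(\l x) + \tfrac{1}{2}(-\l x)$, squeezing $f(\l x) \to f(0)$ as $\l \to 0$; continuity at an arbitrary point then follows by a further translation argument using that $\Dom{f} = V \!$ is open. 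Everything else is routine bookkeeping, so the proof is short: apply the Main~Theorem with $C = V$, absorb the vacuous boundary condition, and match ``continuous'' with ``non-empty interior of the epigraph'' via the above two implications.
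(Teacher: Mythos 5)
Your proposal is correct and follows essentially the same route as the paper: the converse direction is handled identically (non-empty interior $\Rightarrow$ $f$ bounded above on a non-empty open set by Proposition~\ref{prop:loc-bounded} $\Rightarrow$ $f$ continuous by the Bourbaki criterion for convex functions $\Rightarrow$ apply the Main~Theorem with $C \as V$, whose boundary condition is vacuous). The only cosmetic difference is in the easy direction, where the paper gets non-emptiness of the interior in one line from Proposition~\ref{prop:ri-strict-cvx} (a non-empty strictly convex set has non-empty relative interior, which here is the interior since $\Aff{{\Epi{f}}\.} = V \cart \RR$), whereas you route through continuity of $f$; both are fine.
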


\bigskip

Another use of the Main~Theorem is related to the property for a subset $C \.$ 
of a real vector space $V \!$ to be convex if and only if all its intersections 
with the straight lines of $V \!$ are convex. 

\smallskip

Indeed, let us recall the following easy-to-prove result about convex functions. 

\bigskip

\begin{proposition} 
   For any subset $C \!$ of a real vector space $V \!\!$ 
   and any function ${\. f \. : C \to \RR}$, we have (1)$\iff$(2)$\iff$(3) with 
   
   \smallskip
   
   \begin{enumerate}[(1)]
      \item ${{\Epi{f}} \;\!\!}$ is convex, 
      
      \medskip
      
      \item ${{\Epi{\rest{f}{C \cap G \.}}} \;\!\!}$ is convex 
      for any affine subspace $G \!$ of $V \!\!$, and 
      
      \medskip
      
      \item ${{\Epi{\rest{f}{C \cap L \.}}} \;\!\!}$ is convex 
      for any straight line $L \.$ of $V \!\!$. 
   \end{enumerate} 
\end{proposition}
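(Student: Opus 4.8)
The plan is to prove the cycle of implications $(1)\Rightarrow(2)\Rightarrow(3)\Rightarrow(1)$, since $(1)\Rightarrow(2)$ and $(2)\Rightarrow(3)$ are almost immediate and the only substantive content is $(3)\Rightarrow(1)$. For $(1)\Rightarrow(2)$: if $\Epi{f}$ is convex, then for any affine subspace $G$ of $V$ one has $\Epi{\rest{f}{C\cap G}} = \Epi{f}\cap(G\cart\RR)$, and the intersection of two convex sets is convex; note $G\cart\RR$ is an affine subspace of $V\cart\RR$, hence convex. For $(2)\Rightarrow(3)$: a straight line $L$ of $V$ is a particular affine subspace (an affine subspace of dimension $1$), so this is a special case of $(2)$. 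Both of these I would dispatch in a sentence or two.

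The real work is $(3)\Rightarrow(1)$. Here I would argue that a set $S$ in a vector space is convex if and only if its intersection with every straight line is convex, and then transfer this to epigraphs. More precisely, suppose $(3)$ holds and let $(x_1,t_1),(x_2,t_2)\in\Epi{f}$ with $\l\in[0,1]$; I must show $\l(x_1,t_1)+(1-\l)(x_2,t_2)\in\Epi{f}$. If $x_1=x_2$, the claim reduces to the convexity of $\Epi{\rest{f}{C\cap L}}$ for any line $L$ through $x_1$ (or is trivial). If $x_1\neq x_2$, let $L$ be the straight line of $V$ through $x_1$ and $x_2$; then $x_1,x_2\in C\cap L$, so $(x_1,t_1),(x_2,t_2)\in\Epi{\rest{f}{C\cap L}}$, which is convex by $(3)$, hence the convex combination lies in $\Epi{\rest{f}{C\cap L}}\inc\Epi{f}$. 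This yields convexity of $\Epi{f}$, i.e.\ $(1)$.

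The main obstacle, such as it is, is purely bookkeeping: one must be careful that the relevant subset of $V\cart\RR$ over which one restricts is indeed $G\cart\RR$ (an affine subspace of the product space), and that the identity $\Epi{f}\cap(G\cart\RR)=\Epi{\rest{f}{C\cap G}}$ holds with the domains matching up correctly (the point $x$ in $\Epi{\rest{f}{C\cap G}}$ must lie in $C\cap G$, which is exactly the condition $x\in C$ together with $x\in G$). There is no genuine analytic difficulty here; the statement is, as the authors say, easy to prove, and the only thing to watch is that one does not need $G$ or $L$ to pass through any prescribed point — one simply chooses $L$ to be the unique line through the two base points $x_1,x_2$ when they differ, and handles the degenerate case $x_1=x_2$ separately (where any line through $x_1$, or indeed the single-point intersection, works).
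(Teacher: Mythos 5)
Your proof is correct. The paper in fact gives no proof of this proposition (it is stated as a recalled, easy-to-prove fact), and your argument is the natural one: the identity $\Epi{\rest{f}{C \cap G \.}} = \Epi{f} \cap (G \cart \RR)$ together with stability of convexity under intersection gives (1)$\imp$(2), a line is an affine subspace so (2)$\imp$(3), and for (3)$\imp$(1) one tests a pair of points of $\Epi{f}$ against the line through their base points (the case of equal base points being immediate from $f(x) \leq \min(t_{1},t_{2})$). This is also consistent with how the paper handles the strict-convexity analogue in Proposition~\ref{prop:epi-rest}, whose proof relies on the same intersection identity.
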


\bigskip

Then, a natural question is to know whether these equivalences are still true 
when replacing convexity by strict convexity. 

\smallskip

Here is the answer. 

\bigskip

\begin{proposition} \label{prop:epi-rest} 
   For any subset $C \!$ of a topological real vector space $V \!\!$ 
   and any function ${\. f \. : C \to \RR}$, we have (1)$\iff$(2)$\imp$(3) with 
   
   \smallskip
   
   \begin{enumerate}[(1)]
      \item ${{\Epi{f}} \;\!\!}$ is strictly convex, 
      
      \medskip
      
      \item ${{\Epi{\rest{f}{C \cap G \.}}} \;\!\!}$ is strictly convex 
      for any affine subspace $G \!$ of $V \!\!$, and 
      
      \medskip
      
      \item ${{\Epi{\rest{f}{C \cap L \.}}} \;\!\!}$ is strictly convex 
      for any straight line $L \.$ in $V \!\!$. 
   \end{enumerate} 
\end{proposition}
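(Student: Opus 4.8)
The plan is to reduce everything to the Main Theorem, exploiting the fact that for $C\cap G$ inside an affine subspace $G$, the affine hull $\Aff{C\cap G}$ is contained in $G$, so the three conditions of the Main Theorem (convexity and openness of the domain in its affine hull, strict convexity and continuity of the restriction, blow-up of the restriction near the relative boundary of the restricted domain) are \emph{intrinsic} and essentially inherited when we pass to sub-affine-subspaces. First I would dispose of $(2)\imp(3)$: every straight line $L$ is in particular an affine subspace, so this is immediate. Next, $(1)\imp(2)$: assume $\Epi{f}$ is strictly convex, and fix an affine subspace $G$. By the Main Theorem, $C$ is convex and open in $\Aff{C}$, $f$ is strictly convex and continuous, and $f(x)\to{+\infty}$ as $x\to x_{0}$ for every $x_{0}\in\rb{C}$. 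I then check the three hypotheses of the Main Theorem for $\rest{f}{C\cap G}$ on the domain $C\cap G$: convexity of $C\cap G$ is clear (intersection of convex sets); strict convexity and continuity of $\rest{f}{C\cap G}$ are inherited from $f$ (continuity by restriction of the topology, strict convexity since a strict inequality on all of $C$ in particular holds on $C\cap G$); the openness of $C\cap G$ in $\Aff{C\cap G}$ and the blow-up near $\rb{C\cap G}$ require the geometric observation below. Granting them, the Main Theorem applied to $\rest{f}{C\cap G}$ yields that $\Epi{\rest{f}{C\cap G}}$ is strictly convex.

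For the converse $(2)\imp(1)$: taking $G\as V$ in $(2)$ gives exactly $(1)$, since $\rest{f}{C\cap V}=f$. So the real content is packaged entirely in the direction $(1)\imp(2)$, and within that, in the two points about the restricted domain. The key geometric fact I would isolate and prove first is: if $C$ is convex and open in $\Aff{C}$, and $G$ is an affine subspace, then $C\cap G$ is open in $\Aff{C\cap G}$, and moreover $\rb{C\cap G}\inc\rb{C}$ (relative boundary taken, respectively, in $\Aff{C\cap G}$ and in $\Aff{C}$). The openness part is standard: $C\cap G$ is the intersection of an open subset of $\Aff{C}$ with the subspace $G\cap\Aff{C}$, hence open in $G\cap\Aff{C}$, and $\Aff{C\cap G}\inc G\cap\Aff{C}$; the reverse-type inclusion needed to upgrade this to openness in $\Aff{C\cap G}$ uses that $C\cap G$ is a (relatively) open convex set, whose affine hull is spanned by any of its interior points together with nearby points, so in fact $\Aff{C\cap G}=G\cap\Aff{C}$ whenever $C\cap G\neq\Oset$ --- and when $C\cap G=\Oset$ the statement in $(2)$ is vacuous. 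The boundary inclusion $\rb{C\cap G}\inc\rb{C}$ then follows because $\clos{C\cap G}\inc\clos{C}\cap G$ and $\ri{C\cap G}\inc\ri{C}\cap G$, so a point of $\rb{C\cap G}=\clos{C\cap G}\setmin\ri{C\cap G}$ lies in $\clos{C}$ but not in $\ri{C}$, i.e.\ in $\rb{C}$.

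With that fact in hand, the blow-up condition for $\rest{f}{C\cap G}$ is immediate: if $x_{0}\in\rb{C\cap G}$ then $x_{0}\in\rb{C}$, and $x\to x_{0}$ within $C\cap G$ is in particular $x\to x_{0}$ within $C$, so $f(x)\to{+\infty}$ by the hypothesis on $f$. This completes the verification of the three Main-Theorem hypotheses for $\rest{f}{C\cap G}$, and hence $(1)\imp(2)$.

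I expect the main obstacle to be purely bookkeeping around the relative-interior / relative-boundary operators when $V$ is not assumed Hausdorff: one must be careful that $\ri{\cdot}$ here denotes interior relative to the affine hull with the subspace topology (as fixed in Definition~\ref{def:ri+rc+rb}), and that the identity $\Aff{C\cap G}=G\cap\Aff{C}$ for nonempty relatively-open convex $C$ genuinely uses relative openness of $C$ and not merely convexity --- without it, $C\cap G$ could meet $G$ in a lower-dimensional slice and the openness claim in $(2)$ would fail (this is, in effect, why only $(2)\imp(3)$ and not $(3)\imp(2)$ is asserted: a line $L$ can cut $C$ transversally to no good effect, whereas every affine subspace that meets $\ri{C}$ is handled by the above, and the subtlety is precisely that $(3)$ quantifies over lines that may avoid $\ri{C}$). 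I would therefore state the geometric lemma carefully, restricting attention throughout to the case $C\cap G\neq\Oset$ and noting the vacuous case separately, and only then invoke the Main Theorem twice, once in each direction.
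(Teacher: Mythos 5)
Your argument is correct, but it takes a genuinely different route from the paper's. You reduce $(1)\imp(2)$ to the Main Theorem applied twice: first to extract the three analytic conditions on $C$ and $f$, then---after checking that these pass to $C\cap G$ and $\rest{f}{C\cap G}$---to conclude that the restricted epigraph is strictly convex. The two transfer facts you need are sound: $\rb{C\cap G}\inc\rb{C}\cap G$ is exactly Point~3.b of Proposition~\ref{prop:ri-rc-rb}, and the openness of $C\cap G$ in $\Aff{C\cap G}$ follows from your spanning argument giving $\Aff{C\cap G}=G\cap\Aff{C}$ when $C\cap G\neq\Oset$ (in fact even without that equality, openness in $G\cap\Aff{C}$ already yields openness in the smaller subspace $\Aff{C\cap G}$); the vacuous case $C\cap G=\Oset$ is handled as you say. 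The paper instead proves $(1)\imp(2)$ directly from Definition~\ref{def:strict-cvx}, never invoking the Main Theorem: it shows $\rc{{\Epi{\rest{f}{C\cap G\.}}}\:\!\!}\inc\rc{{\Epi{f}}\.}$, uses strict convexity of ${\Epi{f}}$ to place the open segment inside $\ri{{\Epi{f}}\.}$, and then intersects the resulting neighborhood with $G\cart\RR$ to land in $\ri{{\Epi{\rest{f}{C\cap G\.}}}\:\!\!}$. The paper's route is purely topological and self-contained at the level of epigraphs; yours is shorter modulo the Main Theorem but imports its full analytic content (continuity, blow-up at the boundary) where none of it is strictly needed. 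One small caveat: your closing heuristic for why $(3)\not\imp(1)$ (lines ``avoiding $\ri{C}$'') is not the actual obstruction---in the paper's counterexample (Example~\ref{ex:epi-not-strictly-cvx}) every line section has a strictly convex epigraph and the failure is that ${\Epi{f}}$ has empty interior for the product topology; but that remark is outside the proof and does not affect its validity.
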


\bigskip

It is to be noticed that the implication~(3)$\imp$(1) 
in Proposition~\ref{prop:epi-rest} is \emph{not} true.
Indeed, the function $\. f \. : V \! \to \RR$ that we considered 
in Example~\ref{ex:epi-not-strictly-cvx} has an epigraph which is not strictly convex 
whereas for any vectors $u_{0} , w \in V \!$ 
with $w \neq 0$ the function $\f : \RR \to \RR$ defined by 
${\f(t) \. \as \:\!\! f \. (u_{0} + t w) = 
\twonorm{w}^{\. 2} \! t^{2} \! + \;\!\! 2 \! \scal{u_{0}}{w} \! t + \twonorm{u_{0}}^{\. 2} \!}$ 
is obviously strictly convex and continuous. Therefore, ${{\Epi{\rest{f}{u_{0} + \RR w \.}}} \;\!\!}$ 
is strictly convex according to the Main~Theorem 
and since the map ${\c : \RR \to u_{0} + \RR w}$ defined by ${\c(t) \. \as u_{0} + t w}$ 
is a homeomorphism. 
This last point is a consequence of Theorem~2 in~\cite[Chapitre~I, page~14]{BouEVT81} 
since $u_{0} + \RR w$ is a finite-dimensional affine space whose subspace topology is Hausdorff 
(as is the topology on $V$). 

\bigskip

Nevertheless, in case $V \!$ is equal to the canonical topological real vector space $\Rn{n}$, 
this implication is true as a consequence of the Main~Theorem. 

\bigskip

\begin{proposition} \label{prop:epi-finite-dim} 
   Given a subset $C \!$ of $\Rn{n} \.$ and a function ${\. f \. : C \to \RR}$, 
   the following properties are equivalent: 
   
   \smallskip
   
   \begin{enumerate}[(1)]
      \item ${{\Epi{f}} \;\!\!}$ is strictly convex. 
      
      \medskip
      
      \item ${{\Epi{\rest{f}{C \cap L \.}}} \;\!\!}$ is strictly convex 
      for any straight line $L \.$ in $\Rn{n} \.$. 
   \end{enumerate} 
\end{proposition}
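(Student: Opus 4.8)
The implication (1)$\imp$(2) is precisely the implication (1)$\imp$(3) of Proposition~\ref{prop:epi-rest}, so the whole task is to prove (2)$\imp$(1), and the plan is to verify, under hypothesis (2), the three conditions on the left-hand side of the Main~Theorem applied with $V = \Rn{n}$. The case $C = \Oset$ is trivial, so I assume $C \neq \Oset$. First I would record what hypothesis (2) gives on each straight line: given a straight line $L$ in $\Rn{n}$ meeting $C$ in more than one point, transporting everything by an affine isomorphism $L \cong \RR$ (which preserves strict convexity) and applying the Main~Theorem in the one-dimensional space $\RR$ shows that $C \cap L$ is convex and open in $L = \Aff{C \cap L}$, that $\rest{f}{C \cap L}$ is strictly convex and continuous, and that $\rest{f}{C \cap L}(x) \to {+\infty}$ as $x \to x_0$ for every $x_0 \in \rb{C \cap L}$ (straight lines meeting $C$ in at most one point carry no information). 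Since convexity of $C$ and strict convexity of $f$ are statements about the line through a prescribed pair of points of $C$, both follow at once from this.

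The first substantial point is that $C$ is open in $\Aff{C}$, and here I would argue by contradiction. As $C$ is a nonempty convex subset of $\Rn{n}$, its relative interior $\ri{C}$ is nonempty (a classical fact, cf.~\cite{Roc70}); fix $b \in \ri{C}$ and suppose that some $x \in C$ does not lie in $\ri{C}$, so $x \neq b$. Let $L$ be the straight line through $b$ and $x$. Then $C \cap L$ is convex, contains $b$ and $x$, and is open in $L$ by the previous step, so $x$ is interior to $C \cap L$ relative to $L$; hence $x \in (b, x')$ for some $x' \in C \cap L$. But $b \in \ri{C}$ and $x' \in \clos{C}$, so the line-segment principle (if $b \in \ri{C}$ and $c \in \clos{C}$, then $[b, c) \inc \ri{C}$) gives $x \in \ri{C}$, a contradiction. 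Hence $\ri{C} = C$, i.e.\ $C$ is open in $\Aff{C}$. It follows that $f$ is a finite convex function on a set open in the finite-dimensional affine space $\Aff{C}$, hence $f$ is continuous (again classical, cf.~\cite{Roc70}).

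It remains to describe the behaviour of $f$ near $\rb{C} = \clos{C} \setmin C$. Let $x_0 \in \rb{C}$ and fix $b \in C$. The open segment $(b, x_0)$ lies in $\ri{C} = C$ by the line-segment principle, so $x_0 \in \rb{C \cap L}$ for the straight line $L$ through $b$ and $x_0$, and the first step gives $f\big((1 - t) b + t x_0\big) \to {+\infty}$ as $t \to 1^{-}$. Suppose now that $f$ does not tend to $+\infty$ along $C$ at $x_0$: then there is a sequence $(x_k)$ in $C$ with $x_k \to x_0$ and $f(x_k) \leq M$ for some $M \in \RR$. Choosing $t \in (0, 1)$ so that $p \as (1 - t) b + t x_0$ satisfies $f(p) > |f(b)| + M$, convexity of $f$ gives $(1 - t) b + t x_k \in C$ and $f\big((1 - t) b + t x_k\big) \leq (1 - t) f(b) + t f(x_k) \leq |f(b)| + M$; letting $k \to {+\infty}$ and using continuity of $f$ then forces $f(p) \leq |f(b)| + M$, a contradiction. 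Hence $f(x) \to {+\infty}$ as $x \to x_0$. All three conditions of the Main~Theorem now hold, so $\Epi{f}$ is strictly convex.

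The step I expect to be the real obstacle is the proof that $C$ is open in $\Aff{C}$: a naive attempt arguing one direction at a time does not work, and the right idea is to probe $C$ along the straight line joining an arbitrary point of $C$ to a fixed relative-interior point and then invoke the line-segment principle. Once relative openness is available, continuity of $f$ is automatic in finite dimensions, and the remaining blow-up of $f$ at $\rb{C}$ is routine.
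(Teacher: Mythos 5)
Your proof is correct, and in the two substantial steps it takes a genuinely different route from the paper. For the relative openness of $C$, the paper fixes $x_{0} \in C$, extracts from $C$ a basis $x_{1} - x_{0}, \ldots, x_{d} - x_{0}$ of the direction space of $\Aff{C}$, uses openness of $C \cap L_{i}$ on the $d$ lines $L_{i}$ through $x_{0}$ and $x_{i}$ to place a small cross-polytope around $x_{0}$ inside the convex set $C$; you instead invoke the two classical finite-dimensional facts that $\ri{C} \neq \Oset$ for a non-empty convex $C \inc \Rn{n}$ and that $[b , c) \inc \ri{C}$ whenever $b \in \ri{C}$ and $c \in \clos{C}$, probing each $x \in C$ along the single line through $x$ and a fixed relative-interior point. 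Your argument is shorter but leans on the accessibility lemma, which the paper only uses later (via Proposition~16 of Bourbaki); the paper's construction is more self-contained and closer in spirit to its general-$V$ toolkit. For the blow-up at $\rb{C}$, the paper goes through Lemma~\ref{lem:rb-line} (identifying $\rb{C \cap L}$ with $\rb{C} \cap L$) and the non-trivial Lemma~\ref{lem:line-infinity} (blow-up along every line through $a$ implies blow-up, proved via Jensen's inequality and convex hulls of a bounded-value sequence); you bypass both by using only the one line through $x_{0}$ and a fixed $b \in C$, together with the already-established continuity of $f$ at the interior point $p = (1 - t) b + t x_{0}$, to contradict a bounded sequence $f(x_{k}) \leq M$. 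This is a genuine simplification in the finite-dimensional setting, at the price of not yielding the stand-alone ``line test'' of Lemma~\ref{lem:line-infinity}; the only cosmetic fix needed is to normalize $M \geq 0$ (or write $(1 - t) f(b) + t f(x_{k}) \leq |f(b)| + |M|$) so that the displayed bound is literally valid.
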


\bigskip

As a straightforward consequence of Proposition~\ref{prop:epi-finite-dim}, 
we obtain in particular the following classical result. 

\bigskip

\begin{consequence*} 
   Any function $\. f \. : \Rn{n} \to \RR$ satisfies the equivalence 
   
   \smallskip
   
   \begin{center} 
      ${{\Epi{f}} \;\!\!}$ is strictly convex \qquad $\iff$ \qquad $\. f \!$ is strictly convex~. 
   \end{center} 
\end{consequence*}

\bigskip
\bigskip


\section{Preliminaries} \label{sec:preliminaries} 

In order to make precise the terms used in the previous section, 
and before proving in Section~\ref{sec:proofs} the Main~Theorem and its consequences, 
we have to give here some definitions and properties about the epigraph of a function 
and the notion of strict convexity. 

\bigskip

\subsection{Epigraphs} \label{sec:epigraphs} 

We begin by recalling the definitions of the epigraph and the strict epigraph for a general function. 

\bigskip

\begin{definition} 
   Given a set $X \.$ and a function $\. f \. : X \! \to \RR$, 
   the \emph{epigraph} of $\. f \.$ is defined by 
   $$
   \Epi{f} \!\. \as \:\!\! \{ (x , r) \in X \. \cart \RR \st \. f \. (x) \leq \:\! r \} \.~,
   $$ 
   whereas its \emph{strict epigraph} is defined by 
   $$
   \Epis{f} \!\. \as \:\!\! \{ (x , r) \in X \. \cart \RR \st \. f \. (x) < \:\! r \} \.~.
   $$ 
\end{definition}

\medskip

\begin{remark} \label{rem:epi-transfo} 
It is straightforward that these two sets satisfy the relation 
$$
\Epis{f} \ = \;\!\. \ (X \. \cart \RR) \! \setmin \s(\Epi{- \! f}) \.~,
$$ 
where $\s$ denotes the involution of $X \. \cart \RR$ 
defined by $\s(x , r) \. \as \;\!\! (x , {-r}) \.$. 
\end{remark}

\bigskip

From now on, $X \.$ will denote a \emph{topological} space 
and we shall give a list of useful properties of the epigraph of a function 
with domain in $X \.$ (Proposition~\ref{prop:loc-bounded} to Proposition~\ref{prop:epi-infinity}) 
that we will need in the sequel (we may refer to~\cite[pages~34 and~123]{HKS05}). 

\bigskip

\begin{proposition} \label{prop:loc-bounded} 
   Given a subset $S \!$ of a topological space $X \!\.$ and a function $\. f \. : S \to \RR$ 
   such that ${{\Epi{f}} \;\!\!}$ has a non-empty interior in $X \. \cart \RR$, 
   there exists a non-empty open set $U \!\.$ in $X \!\.$ with $U \. \inc S \!$ 
   on which $\. f \!$ is bounded from above. 
\end{proposition}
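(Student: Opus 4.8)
The strategy is to exploit the special product structure of $X \cart \RR$: the interior of $\Epi{f}$ is a non-empty open set, so it contains a basic open box, and the projection of such a box onto $X$ gives the open set $U$, while the ``vertical'' slot of the box gives the upper bound on $f$. Concretely, pick a point $(x_{0} , r_{0})$ in the interior of $\Epi{f}$ inside $X \cart \RR$. By definition of the product topology there exist an open set $U$ in $X$ and a real number $\e > 0$ such that $U \cart {]r_{0} - \e , r_{0} + \e[} \inc \Epi{f}$, and we may clearly arrange $x_{0} \in U$, so $U$ is non-empty. First I would record that $U \inc S$: indeed, for any $x \in U$ the pair $(x , r_{0})$ lies in $U \cart {]r_{0} - \e , r_{0} + \e[} \inc \Epi{f} \inc S \cart \RR$, whence $x \in S$ (here one uses that $\Epi{f}$ is by construction a subset of $S \cart \RR$, so its projection to $X$ is contained in $S$).

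Next I would extract the bound. Fix any $x \in U$. Since $(x , r) \in U \cart {]r_{0} - \e , r_{0} + \e[} \inc \Epi{f}$ for every $r \in {]r_{0} - \e , r_{0} + \e[}$, the definition of the epigraph gives $f(x) \leq r$ for all such $r$; letting $r$ range over this interval and taking the infimum yields $f(x) \leq r_{0} - \e$. Hence $f$ is bounded from above on $U$ by the constant $r_{0} - \e$ (any constant $\geq r_{0} - \e$, e.g.\ $r_{0}$, works just as well), which is exactly the assertion.

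There is essentially no obstacle here: the only thing to be careful about is that ``interior in $X \cart \RR$'' must be unpacked via a \emph{basic} open neighbourhood of the product topology — an open rectangle $U \cart J$ with $U$ open in $X$ and $J$ an open interval — rather than an arbitrary open set, so that the projection onto the first factor stays open in $X$ and the second factor genuinely delivers a one-sided bound. One should also note that no convexity, no vector-space structure, and no separation axiom on $X$ is used: the statement and proof are purely topological, which is why the proposition is phrased for an arbitrary topological space $X$. If one prefers, the same argument shows the slightly stronger fact that $f$ is bounded \emph{from above and from below} on $U$ is \emph{not} available — only the upper bound comes for free, since $\Epi{f}$ is unbounded upward — so I would resist the temptation to claim more than stated.
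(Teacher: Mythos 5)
Your argument is correct and is essentially the paper's own proof: both extract a basic open box $U\cart J$ (or the slice $U\cart\{t_{0}\}$) contained in $\Epi{f}$ from a point of its interior, observe $U\inc S$, and read off the constant upper bound on $U$ from the definition of the epigraph. No further comment is needed.
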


\medskip

\begin{proof}~\\ 

\vspace{-17pt}

Given ${\;\!\! (x_{0} , t_{0}) \in \intr{\wideparen{\Epi{f}}} \.}$, 
there exists an open set $U \!$ in $X \.$ such that 
one has $x_{0} \in U \!$ and ${U \. \cart \. \{ t_{0} \} \. \inc {\Epi{f}} \;\!\!}$ 
since ${{\Epi{f}} \;\!\!}$ is a neighborhood of ${\;\!\! (x_{0} , t_{0}) \;\!\!}$ in $X \. \cart \RR$. 
Therefore, for any $x \in U \!$, we get $\. f \. (x) \leq \:\! t_{0}$. 
\end{proof}

\bigskip

\begin{proposition} \label{prop:epi-interior} 
   Given a function $\. f \. : X \! \to \RR$ defined on a topological space $X \!$, 
   a point $x_{0} \in \. X \!\.$ and a number $r_{0} \in \RR$, 
   the following properties are equivalent: 
   
   \vspace{-4pt}
   
   \begin{enumerate}[(1)]
      \item $(x_{0} , r_{0}) \in \intr{\wideparen{\Epi{f}}} \.$. 
      
      \vspace{-3pt}
      
      \item $\exists r < r_{0}, \ x_{0} \in \intr{\wideparen{\inv{f}( \. {-\infty} , r)}} \.$. 
      
      \vspace{-3pt}
      
      \item $\exists s < r_{0}, \ \. \{ x_{0} \} \. \cart [s , {+\infty}) 
      \inc \intr{\wideparen{\Epi{f}}} \.$. 
   \end{enumerate} 
\end{proposition}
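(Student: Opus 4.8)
The plan is to prove the cycle of implications $(1) \imp (2) \imp (3) \imp (1)$, working throughout with the product topology on $X \. \cart \RR$ and using only the definition of the epigraph together with the definition of interior via neighborhoods. The key observation that makes everything work is that the epigraph is a "vertically upward-closed" set: if $(x , r) \in {\Epi{f}}$ and $r' \geq r$, then $(x , r') \in {\Epi{f}}$, and moreover its interior inherits a weaker form of this (if $(x , r) \in \intr{\wideparen{\Epi{f}}}$ and $r' > r$ then $(x , r') \in \intr{\wideparen{\Epi{f}}}$, since one can slide a box neighborhood upward).

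For $(1) \imp (2)$: suppose $(x_{0} , r_{0}) \in \intr{\wideparen{\Epi{f}}}$. By definition of the product topology there is an open $U \ni x_{0}$ in $X$ and an $\e > 0$ with $U \. \cart \. (r_{0} - \e , r_{0} + \e) \. \inc {\Epi{f}}$. Pick $r \as r_{0} - \e / 2 < r_{0}$. Then for every $x \in U$ we have $(x , r) \in {\Epi{f}}$, i.e.\ $\. f \. (x) \leq r < r_{0} - \e/4 =: r$; so $U \. \inc \inv{f}( \. {-\infty} , r_{0} - \e/4)$ for the slightly larger bound, hence $x_{0} \in \intr{\wideparen{\inv{f}( \. {-\infty} , r)}}$ with this $r < r_{0}$. (One just has to be a little careful to produce a \emph{strict} inequality $\. f \. (x) < r$; taking the bound strictly between $r_{0} - \e$ and $r_{0}$ and noting $\. f \. (x) \leq r_{0} - \e$ does it.)

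For $(2) \imp (3)$: let $r < r_{0}$ with $x_{0}$ in the interior of $V \as \inv{f}( \. {-\infty} , r)$, and choose an open $U$ in $X$ with $x_{0} \in U \. \inc V$. Set $s \as r < r_{0}$. For any $x \in U$ we have $\. f \. (x) < r = s$, so $\{ x \} \. \cart [s , {+\infty}) \. \inc {\Epi{f}}$; in fact $U \. \cart (s - \d , {+\infty})$ lies in ${\Epi{f}}$ for a suitable $\d > 0$ (any $\d$ with $r + $ nothing$\ldots$ — more simply, $U \. \cart (r , {+\infty}) \inc {\Epi{f}}$ already, but we want a neighborhood of each point of $\{x_{0}\} \cart [s,{+\infty})$). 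Concretely, for each $t \geq s$ the box $U \. \cart (t - 1 , t + 1)$ meets ${\Epi{f}}$ only where the second coordinate exceeds $r$, which fails near $t = s$; the clean fix is to replace $[s, {+\infty})$ by noting that $U \. \cart (r , {+\infty}) \inc {\Epi{f}}$ is open and contains $\{ x_{0} \} \. \cart (r , {+\infty})$, then observe any $s$ with $r < s < r_{0}$ gives $\{ x_{0} \} \. \cart [s , {+\infty}) \inc U \. \cart (r , {+\infty}) \inc \intr{\wideparen{\Epi{f}}}$. So $(3)$ holds with this $s$.

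For $(3) \imp (1)$: immediate, since $s < r_{0}$ gives $r_{0} \in [s , {+\infty})$, whence $(x_{0} , r_{0}) \in \{ x_{0} \} \. \cart [s , {+\infty}) \inc \intr{\wideparen{\Epi{f}}}$.

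The main obstacle — really the only subtlety — is bookkeeping the strict versus non-strict inequalities: the epigraph is defined by $\leq$ but statement $(2)$ refers to the sublevel set $\inv{f}( \. {-\infty} , r)$ defined by $<$, so in passing between them one must shrink the bound slightly (using a half-interval or choosing an intermediate real) so that a box neighborhood contained in ${\Epi{f}}$ translates into a genuine strict sublevel containment, and vice versa. The "vertical slack" in $(3)$ (allowing the whole ray $[s , {+\infty})$ rather than just the point $r_{0}$) is exactly what absorbs this, which is presumably why the authors included it as the third equivalent form.
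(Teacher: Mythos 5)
Your argument is correct and follows essentially the same route as the paper's proof: extract a product neighborhood of $(x_{0},r_{0})$ inside the epigraph to get a uniform bound $f\leq r_{0}-\e$ on an open set and then pass to a strictly larger $r<r_{0}$ for (2); use the open sublevel set to show $U\cart(r,+\infty)\inc\Epi{f}$ and pick $s$ strictly between $r$ and $r_{0}$ for (3); and note (3)$\imp$(1) is immediate. The only issues are cosmetic (the variable $r$ is redefined mid-sentence in (1)$\imp$(2), and (2)$\imp$(3) contains a visible false start before the correct fix), but the mathematics matches the paper's.
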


\medskip

This is a characterization of the interior of the epigraph of a function. 

\medskip

\begin{proof}~\\ 

\vspace{-17pt}

\textsf{Point~1 $\imp$ Point~2.} 
Assume we have ${\;\!\! (x_{0} , r_{0}) \in \intr{\wideparen{\Epi{f}}} \.}$. 

\smallskip

Then there exist a neighborhood $V \!$ of $x_{0}$ in $X \.$ and a number $\e > 0$ 
that satisfy the inclusion 
${V \. \cart [r_{0} - \;\!\! 2 \e \, , \, r_{0} + \;\!\! 2 \e] \inc {\Epi{f}} \.}$, 
from which we get $\. f \. (x) \leq \:\! r_{0} - \;\!\! 2 \e$ for any $x \in V \!\!$, 
or equivalently ${V \. \inc \inv{f}( \. {-\infty} , r) \;\!\!}$ 
with $r \as r_{0} - \e < r_{0}$. 

\smallskip

This proves that $x_{0}$ belongs to the interior 
of ${\inv{f}( \. {-\infty} , r) \;\!\!}$ in $\RR$. 


\textsf{Point~2 $\imp$ Point~3.} 
Assume that we have ${x_{0} \in \intr{\wideparen{\inv{f}( \. {-\infty} , r)}} \;\!\!}$ 
for some $r < r_{0}$. 

\smallskip

Therefore, there is a neighborhood $V \!$ of $x_{0}$ in $X \.$ 
that satisfies ${V \. \inc \inv{f}( \. {-\infty} , r) \inc \inv{f}( \. {-\infty} , r] \.}$, 
which yields ${V \. \cart [r , {+\infty}) \inc {\Epi{f}} \.}$, and hence the inclusion 
${\. \{ x_{0} \} \. \cart [s , {+\infty}) \inc \intr{\wideparen{\Epi{f}}} \;\!\!}$ holds 
with ${s \:\!\. \as \;\!\! (r + r_{0}) \! / \. 2}$ 
since we have ${s \in (r , r_{0}) \;\!\!}$ and since the interval ${\;\!\! [r , {+\infty}) \;\!\!}$ 
is a neighborhood in $\RR$ of any number $\tau \in [s , {+\infty}) \.$. 

\medskip

\textsf{Point~3 $\imp$ Point~1.} 
This is clear. 
\end{proof}

\bigskip

\begin{proposition} \label{prop:epi-intr+clos} 
   Any function $\. f \. : X \! \to \RR$ defined on a topological space $X \!\.$ 
   satisfies the following properties: 
   
   \vspace{-2pt}
   
   \begin{enumerate}[(1)]
      \item $\intr{\wideparen{\Epi{f}}} = \ \!\! \intr{\wideparen{\Epis{f}}} \.$. 
      
      
      \item ${\intr{\wideparen{\Epi{f}}} \cap ( \. \{ x \} \. \cart \RR) 
      \inc \. \{ x \} \. \cart (f \. (x) , {+\infty}) \;\!\!}$ for any $x \in \. X \!$. 
   \end{enumerate} 
\end{proposition}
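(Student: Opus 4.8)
The plan is to prove the two assertions in turn, using Proposition~\ref{prop:epi-interior} as the main tool.

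For part~(1), one inclusion is immediate: since $\Epis{f} \inc \Epi{f}$, we get $\intr{\wideparen{\Epis{f}}} \inc \intr{\wideparen{\Epi{f}}}$. For the reverse inclusion, I would take a point $(x_{0} , r_{0}) \in \intr{\wideparen{\Epi{f}}}$ and apply the equivalence (1)$\iff$(3) of Proposition~\ref{prop:epi-interior}: there exists $s < r_{0}$ with $\{ x_{0} \} \cart [s , {+\infty}) \inc \intr{\wideparen{\Epi{f}}}$. Since $\intr{\wideparen{\Epi{f}}}$ is open, it is covered by basic open boxes $U \cart (a , b)$ contained in $\Epi{f}$; on such a box one has $f \. (x) \leq \:\! a' < b'$ for any $x \in U$ and any $a < a' < b' < b$, hence $U \cart (a' , b') \inc \Epis{f}$. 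This shows $\intr{\wideparen{\Epi{f}}} \inc \Epis{f}$, and being open it is therefore contained in $\intr{\wideparen{\Epis{f}}} \.$. (Alternatively, and perhaps more cleanly, one derives $\intr{\wideparen{\Epi{f}}} \inc \Epis{f}$ directly from part~(2), which is proved independently below, and then concludes as above.)

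For part~(2), I would fix $x \in \. X$ and take a point $(x , r) \in \intr{\wideparen{\Epi{f}}} \cap ( \. \{ x \} \. \cart \RR)$. Applying the implication (1)$\imp$(2) of Proposition~\ref{prop:epi-interior} with $x_{0} \as x$ and $r_{0} \as r$, there exists $r' < r$ such that $x \in \intr{\wideparen{\inv{f}( \. {-\infty} , r')}} \.$; in particular $x \in \inv{f}( \. {-\infty} , r')$, i.e. $f \. (x) < r' < r$. Hence $(x , r) \in \. \{ x \} \. \cart (f \. (x) , {+\infty}) \.$, which is the desired inclusion. The main point here is simply to invoke the characterization of the interior of the epigraph; no genuine obstacle arises.

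The only mildly delicate step is the reverse inclusion in part~(1): one must be careful that "interior of the epigraph" is computed in $X \. \cart \RR$ with the product topology, so that a point of the interior lies in a basic box $U \cart (a,b) \inc \Epi{f}$, and then shrink the vertical interval to land strictly inside $\Epis{f}$. Once this is observed, everything else is routine.
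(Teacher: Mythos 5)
Your proof is correct and follows essentially the same route as the paper: both arguments reduce everything to the inclusion $\intr{\wideparen{\Epi{f}}} \inc \Epis{f}$, obtained from the characterization of interior points of the epigraph (a vertical ray, respectively a basic product-topology box, below the given point forces $f(x) < r$), and then conclude by taking interiors and intersecting with $\{x\} \cart \RR$. The only difference is organizational — the paper derives Point~(2) from the inclusion proved for Point~(1), while you prove (2) independently via Proposition~\ref{prop:epi-interior} — which is immaterial.
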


\medskip

This property gives a topological relationship between 
the epigraph and the strict epigraph of a function. 

\medskip

\begin{proof}~\\ 

\vspace{-17pt}

\textsf{Point~1.} 
Given a point ${\;\!\! (x , r) \in \intr{\wideparen{\Epi{f}}} \.}$, there exists $r_{0} < r$ 
that satisfies ${\. \{ x \} \. \cart [r_{0} , {+\infty}) \inc {\Epi{f}} \.}$, 
which yields in particular ${\. f \. (x) \leq \:\! r_{0}}$, and hence ${\. f \. (x) < \:\! r}$. 
This proves ${\intr{\wideparen{\Epi{f}}} \inc {\Epis{f}} \.}$, which implies 
${\intr{\wideparen{\Epi{f}}} \inc \intr{\wideparen{\Epis{f}}} \;\!\!}$ 
by taking the interiors in the product space $X \. \cart \RR$. 


Conversely, the obvious inclusion ${\Epis{f} \inc {\Epi{f}} \;\!\!}$ yields 
$\intr{\wideparen{\Epis{f}}} \inc \intr{\wideparen{\Epi{f}}} \.$. 

\smallskip

\textsf{Point~2.} 
By Point~1 above, we have ${\intr{\wideparen{\Epi{f}}} \inc {\Epis{f}} \.}$, and hence 
$\intr{\wideparen{\Epi{f}}} \cap ( \. \{ x \} \. \cart \RR) 
\inc \Epis{f} \cap ( \. \{ x \} \. \cart \RR) 
= \;\!\! \{ x \} \. \cart (f \. (x) , {+\infty}) \;\!\!$ for any $x \in \. X \.$. 
\end{proof}

\bigskip

\begin{proposition} \label{prop:epi-usc} 
   Given a function $\. f \. : X \! \to \RR$ defined on a topological space $X \!\.$ 
   and a point $x_{0} \in \. X \!$, the following properties are equivalent: 
   
   \smallskip
   
   \begin{enumerate}[(1)]
      \item $\. f \!$ is upper semi-continuous at $x_{0}$. 
      
      \vspace{-2pt}
      
      \item $\{ x_{0} \} \. \cart (f \. (x_{0}) , {+\infty}) 
      = \ \!\! \intr{\wideparen{\Epi{f}}} \cap ( \. \{ x_{0} \} \. \cart \RR) \.$. 
   \end{enumerate} 
\end{proposition}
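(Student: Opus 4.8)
The plan is to prove the equivalence $(1) \iff (2)$ in Proposition~\ref{prop:epi-usc} by unwinding the definition of upper semi-continuity at $x_0$ and translating it into the language of the interior of the epigraph, using the characterization already obtained in Proposition~\ref{prop:epi-interior}. Recall that $f$ is upper semi-continuous at $x_0$ exactly when for every $r > f(x_0)$ there is a neighborhood $V$ of $x_0$ in $X$ with $f(x) < r$ for all $x \in V$, i.e. $x_0 \in \intr{\wideparen{\inv{f}( \. {-\infty} , r)}}$. The point~2 to be proved can be rewritten, using Point~2 of Proposition~\ref{prop:epi-intr+clos}, as the statement that the reverse inclusion $\{ x_0 \} \cart (f \. (x_0) , {+\infty}) \inc \intr{\wideparen{\Epi{f}}}$ holds; so the whole proof reduces to matching ``$(x_0, r_0) \in \intr{\wideparen{\Epi{f}}}$ for all $r_0 > f(x_0)$'' against the neighborhood condition defining upper semi-continuity.

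First I would handle $(1) \imp (2)$. By Point~2 of Proposition~\ref{prop:epi-intr+clos} the inclusion $\intr{\wideparen{\Epi{f}}} \cap ( \. \{ x_0 \} \cart \RR) \inc \{ x_0 \} \cart (f \. (x_0) , {+\infty})$ is automatic, so it suffices to show the opposite inclusion. Take $r_0 > f \. (x_0)$; I want $(x_0, r_0) \in \intr{\wideparen{\Epi{f}}}$. Pick any $r$ with $f \. (x_0) < r < r_0$. Upper semi-continuity at $x_0$ gives a neighborhood $V$ of $x_0$ with $V \inc \inv{f}( \. {-\infty} , r)$, which means $x_0 \in \intr{\wideparen{\inv{f}( \. {-\infty} , r)}}$ with $r < r_0$. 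This is exactly Point~2 of Proposition~\ref{prop:epi-interior}, so Point~1 of that proposition gives $(x_0, r_0) \in \intr{\wideparen{\Epi{f}}}$, as desired.

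For the converse $(2) \imp (1)$, assume the equality in Point~2 holds and let $r_0 > f \. (x_0)$ be arbitrary. Then $(x_0, r_0)$ lies in the right-hand side, hence in $\intr{\wideparen{\Epi{f}}}$. Applying the implication Point~1~$\imp$~Point~2 of Proposition~\ref{prop:epi-interior} produces some $r < r_0$ with $x_0 \in \intr{\wideparen{\inv{f}( \. {-\infty} , r)}}$; that is, there is a neighborhood $V$ of $x_0$ with $f \. (x) < r < r_0$ for all $x \in V$. Since $r_0 > f \. (x_0)$ was arbitrary, this is precisely upper semi-continuity of $f$ at $x_0$.

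There is no real obstacle here: the proposition is essentially a dictionary entry, and once the right reformulation via Proposition~\ref{prop:epi-interior} and Point~2 of Proposition~\ref{prop:epi-intr+clos} is in place, both directions are immediate. The only mild subtlety to keep straight is the strict-versus-nonstrict bookkeeping between $r$, $r_0$, and $f(x_0)$ — inserting an intermediate value $r$ strictly between $f(x_0)$ and $r_0$ in the forward direction — but this is routine and already anticipated by the way Proposition~\ref{prop:epi-interior} is phrased.
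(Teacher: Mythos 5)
Your proof is correct and follows essentially the same route as the paper's: both directions come down to the observation that $(x_{0} , r_{0}) \in \intr{\wideparen{\Epi{f}}}$ exactly when $f$ is locally bounded above by some $r < r_{0}$ near $x_{0}$, with the reverse inclusion in~(2) supplied by Point~2 of Proposition~\ref{prop:epi-intr+clos}. The only (harmless) organizational difference is that you delegate the box-neighborhood argument to Proposition~\ref{prop:epi-interior}, whereas the paper re-derives it inline with an explicit $\e$.
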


\medskip

This is a geometric characterization of the upper semi-continuity 
of a function in terms of its epigraph. 

\medskip

\begin{proof}~\\ 
\textsf{Point~1 $\imp$ Point~2.} 
Given an arbitrary $\e > 0$, there exists a neighborhood $V \!$ of $x_{0}$ in $X \.$ that satisfies 
$\. f \. (x) \leq \:\! \:\!\! f \. (x_{0}) \;\!\! + \e \. / \. 2$ for any $x \in V \!\.$. 

\vspace{-5pt}

Thus, we have 
${V \. \cart [f \. (x_{0}) \;\!\! + \e \. / \. 2 \, , {+\infty}) \inc {\Epi{f}} \.}$, 
and hence ${\;\!\! (x_{0} , \. f \. (x_{0}) \;\!\! + \e) \in \intr{\wideparen{\Epi{f}}} \.}$. 

\smallskip

So we proved the direct inclusion $\inc$. 

\smallskip

The reverse inclusion $\cni$ is straightforward 
by Point~2 in Proposition~\ref{prop:epi-intr+clos}. 

\bigskip

\textsf{Point~2 $\imp$ Point~1.} 
Conversely, given an arbitrary number $\e > 0$, we have 
$\;\!\! (x_{0} , \. f \. (x_{0}) \;\!\! + \e) 
\in \. \{ x_{0} \} \. \cart (f \. (x_{0}) , {+\infty}) \inc \intr{\wideparen{\Epi{f}}} \.$. 

\medskip

Thus, there exists a neighborhood $V \!$ of $x_{0}$ in $X \.$ satisfying 
${V \. \cart \{ f \. (x_{0}) \;\!\! + \e \} \. \inc {\Epi{f}} \.}$, 
which yields $\. f \. (x) \leq \:\! \:\!\! f \. (x_{0}) \;\!\! + \e$ for any $x \in V \!\.$. 
\end{proof}

\bigskip

\begin{proposition} \label{prop:epi-infinity} 
   Given a subset $A \.$ of a topological space $X \!$, a point $x_{0} \in \clos{A} \.$ 
   and a function ${\. f \. : A \to \RR}$, we have the equivalence 
   $$
   \big( f \. (x) \ \to \ \:\! {+\infty} \quad \mbox{as} \quad x \ \to \ x_{0} \big) 
   \qquad \iff \qquad 
   \big( \. (x_{0} , r) \not \in \! \clos{\, \Epi{f}} \;\!\! 
   \quad \mbox{for any} \quad r \in \RR \big) \.~.
   $$ 
\end{proposition}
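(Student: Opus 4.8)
The plan is to establish the equivalence by proving each implication as its contrapositive, exploiting the basic fact that $f(x) \to +\infty$ as $x \to x_0$ means exactly that every neighbourhood of $x_0$ in $A$ eventually leaves every sublevel set, i.e. for each $M \in \RR$ there is a neighbourhood $V$ of $x_0$ in $X$ with $f(x) > M$ for all $x \in V \cap A$.

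For the forward implication, I would assume $f(x) \to +\infty$ as $x \to x_0$ and fix an arbitrary $r \in \RR$; I must show $(x_0, r) \notin \clos{\Epi{f}}$, that is, I must produce a neighbourhood of $(x_0, r)$ in $X \cart \RR$ disjoint from $\Epi{f}$. Taking $M \as r$ in the hypothesis gives a neighbourhood $V$ of $x_0$ in $X$ with $f(x) > r$ on $V \cap A$, and then the open set $V \cart (-\infty, r+1)$ (say) is a neighbourhood of $(x_0, r)$; I would check it meets $\Epi{f}$ only in points $(x,t)$ with $x \in V \cap A$ and $t < r+1$, which forces $f(x) \leq t < r+1$, not immediately a contradiction — so instead I should use $V \cart (-\infty, r]$ or rather observe that any $(x,t)$ in the intersection has $x \in V \cap A$ hence $f(x) > r$, while membership in $\Epi{f}$ gives $f(x) \leq t$; choosing the second coordinate of the neighbourhood to be $(-\infty, r)$ (an open set containing... no, $r \notin (-\infty,r)$). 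The clean choice is: use $M \as r+1$, get $V$ with $f > r+1$ on $V \cap A$, and take the neighbourhood $V \cart (-\infty, r+1)$ of $(x_0,r)$; any point of $\Epi{f}$ inside it satisfies $f(x) \leq t < r+1$ contradicting $f(x) > r+1$. Hence the neighbourhood is disjoint from $\Epi{f}$, so $(x_0,r) \notin \clos{\Epi{f}}$.

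For the converse I would prove the contrapositive: suppose $f(x) \to +\infty$ as $x \to x_0$ fails. Then there is some $M \in \RR$ such that for every neighbourhood $V$ of $x_0$ in $X$ there exists $x \in V \cap A$ with $f(x) \leq M$. This means the point $(x_0, M) \in X \cart \RR$ is a limit of points $(x, M)$ with $x \in V \cap A$ and $f(x) \leq M$, i.e. $(x, M) \in \Epi{f}$: precisely, every basic neighbourhood $V \cart W$ of $(x_0, M)$ with $M \in W$ contains such a point $(x,M) \in \Epi{f}$. Therefore $(x_0, M) \in \clos{\Epi{f}}$, so the right-hand side fails for $r \as M$. (Here $x_0 \in \clos{A}$ is used only to guarantee that for small neighbourhoods $V$ the intersection $V \cap A$ is non-empty, so that the negated statement is not vacuous.)

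The only delicate point — and the one I would be most careful about — is the interplay between the ``$\to +\infty$'' formulation and the neighbourhood-filter language, since $A$ need not be all of $X$ and $x_0$ lies merely in $\clos{A}$; one must consistently read $f(x) \to x_0$ as the limit along the trace filter of $x_0$ on $A$, and check that the negation is the existentially quantified statement used above. Everything else is a routine unwinding of the definitions of closure and of the product topology on $X \cart \RR$, with no need to invoke convexity or any structure beyond that of a topological space.
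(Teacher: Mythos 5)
Your proof is correct and follows essentially the same route as the paper: both directions are a direct unwinding of the definitions of closure, the product topology on $X \times \RR$, and divergence to $+\infty$ along the trace filter of $x_0$ on $A$. The only cosmetic difference is that you prove the converse by contrapositive (exhibiting $(x_0, M)$ as a closure point when the limit fails at level $M$), whereas the paper argues it directly (for each $r$, a product neighbourhood of $(x_0,r)$ missing $\Epi{f}$ forces $f > r$ near $x_0$); the underlying observation is identical.
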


\smallskip

This is a characterization of the closure of the epigraph of a function. 

\medskip

\begin{proof}~\\ 
\textasteriskcentered \ ($\imp$) 
Given $r \in \RR$, there exists a neighborhood $V \!$ of $x_{0}$ in $X \.$ 
such that we have in particular the inclusion $\. f \. (V) \inc [r + \;\!\! 2 \, , \, {+\infty}) \.$. 

\smallskip

Thus, we get 
${\;\!\! (V \. \cart ( \. {-\infty} \, , \, r + \;\!\! 1 \:\!\. ] \. ) \cap \Epi{f} = \Oset \.}$, 
which shows that $\;\!\! (x_{0} , r) \;\!\!$ does not belong to ${\! \clos{\, \Epi{f}} \;\!\!}$ 
since ${V \. \cart ( \. {-\infty} \, , \, r + \;\!\! 1 \:\!\. ] \;\!\!}$ 
is a neighborhood of $\;\!\! (x_{0} , r) \;\!\!$ in $X \. \cart \RR$. 

\medskip

\textasteriskcentered \ ($\con$) 
Given $r \in \RR$, there exist a neighborhood $V \!$ of $x_{0}$ in $X \.$ and a real number $\e > 0$ 
such that ${V \. \cart [r - \e \, , \, r + \e] \;\!\!}$ does not meet ${{\Epi{f}} \.}$. 

\smallskip

Since we have $r \in [r - \e \, , \, r + \e] \.$, 
this yields ${\. f \. (V) \cap ( \. {-\infty} , r) = \Oset \.}$, 
which is equivalent to the inclusion $\. f \. (V) \inc (r , {+\infty}) \.$. 

\smallskip

So, we have proved $\. f \. (x) \to \:\! {+\infty}$ as $x \to x_{0}$. 
\end{proof}

\bigskip

\subsection{Strict convexity} \label{sec:strict-convexity} 

In this subsection, we first recall the definitions of the relative interior and the relative closure 
of a set in a topological real vector space since they underly strict convexity, 
and then we establish a couple of useful properties needed in Section~\ref{sec:proofs}. 

\bigskip

We begin with two basic notions in affine geometry: the affine hull and convex sets 
(see for example~\cite{Roc70} and~\cite{Web94}). 

\bigskip

\begin{definition} \label{def:aff-hull} 
   The \emph{affine hull} ${{\Aff{S}} \;\!\!}$ of a subset $S$ of a real vector space $V \!$ 
   is the smallest affine subspace of $V \!$ which contains $S \.$. 
\end{definition}

\bigskip

So, for any subsets $A$ and $B$ of $V \!$ satisfying $A \inc B$, 
we obviously have ${\Aff{A} \inc {\Aff{B}} \.}$. 

\bigbreak

\begin{proposition} \label{prop:aff-cart} 
   Given real vector spaces $V \!\!$ and $W \!\!$, 
   any subsets $A \inc V \!\!$ and $B \inc W \!\!$ satisfy 
   \[
   \Aff{A \cart B} \ = \ \Aff{A} \cart {\Aff{B}} \.~.
   \] 
\end{proposition}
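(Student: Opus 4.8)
The plan is to prove the two inclusions separately, using only the defining property of the affine hull (smallest affine subspace containing the set) from Definition~\ref{def:aff-hull}. The key preliminary observation is that for any affine subspaces $G \inc V$ and $H \inc W$, the product $G \cart H$ is an affine subspace of $V \cart W$: indeed, affine combinations in a product are computed coordinatewise, so if $G$ and $H$ are each closed under affine combinations, then so is $G \cart H$. This is the only non-formal ingredient and it is entirely routine.

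First I would prove ``$\inc$''. We have $A \inc \Aff{A}$ and $B \inc \Aff{B}$, hence $A \cart B \inc \Aff{A} \cart \Aff{B}$. By the preliminary observation, $\Aff{A} \cart \Aff{B}$ is an affine subspace of $V \cart W$ containing $A \cart B$, so by minimality of the affine hull we get $\Aff{A \cart B} \inc \Aff{A} \cart \Aff{B}$. (If $A \cart B = \Oset$, i.e.\ $A = \Oset$ or $B = \Oset$, both sides are empty and the equality is trivial; so one may assume $A$ and $B$ nonempty, which is anyway needed for the reverse inclusion.)

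For ``$\cni$'', assuming $A \neq \Oset$ and $B \neq \Oset$, fix $b_{0} \in B$. Then $A \cart \{ b_{0} \} \inc A \cart B \inc \Aff{A \cart B}$, and the latter is an affine subspace; projecting onto $V$ (an affine map) shows that $\Aff{A} \cart \{ b_{0} \}$ is contained in $\Aff{A \cart B}$, using that the image/preimage behaviour of affine hulls under the affine embedding $v \mapsto (v , b_{0})$ gives $\Aff{A \cart \{ b_{0} \}} = \Aff{A} \cart \{ b_{0} \}$. Symmetrically, for any fixed $a_{0} \in A$ one gets $\{ a_{0} \} \cart \Aff{B} \inc \Aff{A \cart B}$. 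Now take an arbitrary $(v , w) \in \Aff{A} \cart \Aff{B}$; since $\Aff{A \cart B}$ contains both $(v , b_{0})$ and $(a_{0} , w)$ — and also $(a_{0} , b_{0})$ — and is closed under affine combinations, the point $(v , w) = (v , b_{0}) + (a_{0} , w) - (a_{0} , b_{0})$ lies in $\Aff{A \cart B}$. Hence $\Aff{A} \cart \Aff{B} \inc \Aff{A \cart B}$, completing the proof.

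The proof is essentially formal; the only point requiring a moment's care — the ``main obstacle'', such as it is — is the bookkeeping with the degenerate empty cases and making sure the coordinatewise description of affine combinations is invoked cleanly, since $V$ and $W$ are arbitrary real vector spaces with no topology or dimension hypotheses used here. No topological input is needed at all for this statement, despite the phrase ``topological real vector space'' appearing nearby.
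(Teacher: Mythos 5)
Your proof is correct. The inclusion $\Aff{A \cart B} \inc \Aff{A} \cart {\Aff{B}}$ is handled exactly as in the paper: $\Aff{A} \cart {\Aff{B}}$ is an affine subspace of $V \cart W$ containing $A \cart B$, so minimality applies. For the reverse inclusion your route is genuinely different. The paper first proves $A \cart \Aff{B} \inc {\Aff{A \cart B}}$ (via lines $\. \{ a \} \cart L$ through pairs of points of $A \cart B$), symmetrically $\Aff{A} \cart B \inc {\Aff{A \cart B}}$, and then concludes by a second pass through the hull operator, $\Aff{A} \cart \Aff{B} \inc \Aff{\Aff{A} \cart B} \inc {\Aff{A \cart B}}$. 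You instead fix basepoints $a_{0} \in A$ and $b_{0} \in B$, place the slices $\Aff{A} \cart \. \{ b_{0} \}$ and $\. \{ a_{0} \} \cart \Aff{B}$ inside ${\Aff{A \cart B}}$ via the affine embeddings $v \mapsto (v , b_{0})$ and $w \mapsto (a_{0} , w)$, and exhibit an arbitrary $(v , w)$ as the explicit affine combination $(v , b_{0}) + (a_{0} , w) - (a_{0} , b_{0})$ of three points of ${\Aff{A \cart B}}$. This is somewhat more direct, and it also sidesteps a small imprecision in the paper's argument, which tacitly treats $\Aff{B}$ as the union of the lines through pairs of points of $B$ — not literally true once $B$ contains three affinely independent points (the paper's step is repairable by iterating, or by an argument like yours). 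Your treatment of the degenerate empty cases agrees with the convention $\Aff{\Oset} = \Oset$ used elsewhere in the paper; the only cosmetic slip is the phrase ``projecting onto $V$'', since the map actually doing the work is the embedding $v \mapsto (v , b_{0})$, not the projection.
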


\smallskip

\begin{proof}~\\ 
\textasteriskcentered \ 
For any ${x , y \in \. B}$ with $x \neq y$, 
the straight line $L$ passing through $x$ and $y$ lies in ${{\Aff{B}} \.}$, 
and hence for any $a \in \. A$ the straight line $\. \{ a \} \cart L$ of $V \. \cart W \!$ 
lies in ${{\Aff{A \cart B}} \;\!\!}$ 
since it contains the points $\;\!\! (a , x) , (a , y) \in A \cart B$. 
Therefore, we get ${A \cart \Aff{B} \inc {\Aff{A \cart B}} \.}$. 

\smallskip

On the other hand, we also have ${\Aff{A} \cart B \inc {\Aff{A \cart B}} \;\!\!}$ 
by the same reasoning. 

\smallskip

These two inclusions yield 
\[
\Aff{A} \cart \Aff{B} \ \inc \ \Aff{\Aff{A} \cart B} \ \inc \ \Aff{{\Aff{A \cart B}} \.} 
\ = \ {\Aff{A \cart B}} \.~.
\] 

\smallskip

\textasteriskcentered \ 
Conversely, since ${\Aff{A} \cart {\Aff{B}} \;\!\!}$ 
is an affine subspace of ${V \. \cart W \!}$ 
which contains ${A \cart B}$, we obviously have ${\Aff{A \cart B} \inc \Aff{A} \cart {\Aff{B}} \.}$. 
\end{proof}

\bigskip

\begin{definition} 
   Given points $x$ and $y$ in a real vector space $V \!\!$, the set 
   \[
   [x , y] \. \as \:\!\! \{ \. ( \:\!\. 1 \;\!\! - t) x + t y \st t \in [0 , \;\!\! 1 \:\!\. ] \. \}
   \] 
   is called the \emph{(closed) line segment} between $x$ and $y$, 
   whereas the set 
   \[
   {]x , y[} \. \as \;\!\! [x , y] \. \setmin \{ x , y \}
   \] 
   is called the \emph{open line segment} 
   between $x$ and $y$ (the latter set is therefore empty in case one has $x = y$). 
   
   \smallskip
   
   A subset $C \.$ of $V \!$ is said to be \emph{convex} 
   if we have ${\;\!\! [x , y] \inc \:\! C \.}$ for all $x , y \in C \.$. 
\end{definition}

\bigskip

In other words, $C \.$ is convex if and only if 
its intersection with any straight line $L$ in $V \!$ is an ``interval'' of $L$. 

\bigskip

From now on and throughout the section, $V \!$ will denote a \emph{topological} real vector space. 

\bigskip

Before we go on, let us just point out some facts. 

\bigskip

\begin{remark} \label{rem:0-tvs} 
~
\begin{enumerate}[1)]
\item Given a neighborhood $U \!$ of the origin in $V \!\!$, the following properties hold: 

\smallskip

\begin{enumerate}[(a)]
   \item For any vector $x \in V \!\!$, there exists $\e > 0$ 
   such that we have ${\;\!\! [{-\e} , \e \;\!\. ] x \inc U \!}$. 
   
   \smallskip
   
   \item For any vector ${x \in V \!\!}$, there exists ${\l > 0}$ such that we have ${x \in \l U \!}$ 
   (the set $U \!$ is then said to be \emph{absorbing}). 
   
   \smallskip
   
   \item We have $\Vect{U} = V \!\.$. 
\end{enumerate}

\smallskip

Indeed, Point~a is easy to prove 
and the implications \, a$\imp$b$\imp$c \, are straightforward. 

\medskip

\item Given a finite-dimensional real vector space $W \!\!$, 
there exists a \emph{unique} topological real vector space structure on $W \!$ which is Hausdorff. 
Endowed with this structure, $W \!$ is then isomorphic 
to the canonical topological real vector space $\Rn{n}$, where $n$ denotes the dimension of $W \!$ 
(see Theorem~2 in~\cite[Chapitre~I, page~14]{BouEVT81}). 
\end{enumerate} 
\end{remark}

\bigbreak

\begin{definition} \label{def:ri+rc+rb} 
   Let $S$ be a subset of a topological real vector space $V \!\.$. 
   
   \smallskip
   
   \begin{enumerate}[(1)]
      \item The \emph{relative interior} ${{\ri{S}} \;\!\!}$ of $S$ is the interior of $S$ 
      with respect to the relative topology of ${{\Aff{S}} \.}$. 
      
      \smallskip
      
      \item The \emph{relative closure} ${{\rc{S}} \;\!\!}$ of $S$ is the closure of $S$ 
      with respect to the relative topology of ${{\Aff{S}} \.}$. 
      
      \smallskip
      
      \item The \emph{relative boundary} ${{\rb{S}} \;\!\!}$ of $S$ is the boundary of $S$ 
      with respect to the relative topology of ${{\Aff{S}} \;\!\!}$ 
      (so we have ${\rb{S} = \rc{S} \!\. \setmin \:\! {\ri{S}} \.}$). 
   \end{enumerate} 
\end{definition}

\bigskip

\begin{proposition} \label{prop:ri-rc-rb} 
   Let $V \!\!$ be a topological real vector space. 
   
   \smallskip
   
   \begin{enumerate}[(1)]
      \item For any subsets $A \.$ and $B \.$ of $V \!\!$, we have the implication 
      ${A \inc B \imp \rc{A} \inc {\rc{B}} \.}$. 
      
      \medskip
      
      \item For any subset $A \.$ of $V \!\!$, we have 
      
      \smallskip
      
      \begin{enumerate}[(a)]
         \item ${\Aff{{\rc{A}} \.} = {\Aff{A}} \.}$, and 
         
         \smallskip
         
         \item $\ri{A} \neq \Oset \. 
         \iff \big( A \neq \Oset \. \ \ \mbox{and} \ \ \Aff{{\ri{A}} \.} = {\Aff{A}} \. \big) \.$. 
      \end{enumerate} 
      
      \medskip
      
      \item For any subset $A \.$ of $V \!\!$ and any affine subspace $W \!\!$ of $V \!\!$, we have 
      
      \smallskip
      
      \begin{enumerate}[(a)]
         \item ${\ri{A} \cap \Aff{A \cap W} \inc {\ri{A \cap W}} \.}$, and 
         
         \smallskip
         
         \item $\rb{A \cap W} \inc \rb{A} \cap W \!\.$. 
      \end{enumerate} 
   \end{enumerate} 
\end{proposition}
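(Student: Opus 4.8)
The plan is to dispatch the four assertions in order, the common engine being the elementary topological fact that closure (resp.\ a neighbourhood of a point) in the subspace topology of $\Aff{A}$ is the trace on $\Aff{A}$ of the corresponding object in $V$; in particular $\rc{A} = \clos{A} \cap \Aff{A}$, and whenever $x_{0} \in \Aff{A}$ the relative neighbourhoods of $x_{0}$ are exactly the sets $U \cap \Aff{A}$ with $U$ a neighbourhood of $x_{0}$ in $V$.

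First, Point~1 is immediate from $\rc{A} = \clos{A} \cap \Aff{A}$ and $\rc{B} = \clos{B} \cap \Aff{B}$, since $A \inc B$ yields both $\clos{A} \inc \clos{B}$ and $\Aff{A} \inc \Aff{B}$. For Point~2a I would use the chain $A \inc \rc{A} \inc \Aff{A}$ (the second inclusion being part of the description of $\rc{A}$ just recalled), then apply $\Aff{\cdot}$ and invoke $\Aff{\Aff{A}} = \Aff{A}$ to sandwich $\Aff{A} \inc \Aff{\rc{A}} \inc \Aff{A}$. For the ``$\Leftarrow$'' half of Point~2b I would argue contrapositively: if $\ri{A} = \Oset$ then $\Aff{\ri{A}} = \Aff{\Oset} = \Oset$, which cannot equal $\Aff{A}$ once $A \neq \Oset$, so at least one of the two right-hand conditions fails.

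The one genuinely topological step is the ``$\Rightarrow$'' half of Point~2b. Given $x_{0} \in \ri{A}$, so $A \neq \Oset$ and $\Aff{\ri{A}} \inc \Aff{A}$ trivially, I would write $\Aff{A} = x_{0} + L$ with $L$ a linear subspace of $V$, choose an \emph{open} neighbourhood $W$ of the origin of $V$ with $x_{0} + (W \cap L) \inc A$, and observe that $x_{0} + (W \cap L)$ is then an open subset of $\Aff{A}$ contained in $A$, hence contained in $\ri{A}$. Now $W \cap L$ is a neighbourhood of the origin in the topological vector space $L$, so $\Vect{W \cap L} = L$ by Remark~\ref{rem:0-tvs}, whence $\Aff{x_{0} + (W \cap L)} = x_{0} + \Vect{W \cap L} = x_{0} + L = \Aff{A}$; monotonicity of $\Aff{\cdot}$ then gives $\Aff{A} \inc \Aff{\ri{A}}$. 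This ``a neighbourhood of the origin affinely spans its ambient space'' observation is where the vector-space structure genuinely enters, and I expect it to be the main---though quite mild---obstacle.

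Finally, Points~3a and~3b exploit that $W$ is an affine subspace through $\Aff{A \cap W} \inc \Aff{W} = W$. For 3a, given $x_{0} \in \ri{A} \cap \Aff{A \cap W}$, I would pick a relative neighbourhood $U$ of $x_{0}$ in $\Aff{A}$ with $U \inc A$; then $U \cap \Aff{A \cap W}$ is a relative neighbourhood of $x_{0}$ in $\Aff{A \cap W}$ sitting inside $A \cap \Aff{A \cap W} \inc A \cap W$, so $x_{0} \in \ri{A \cap W}$. For 3b, take $x_{0} \in \rb{A \cap W} = \rc{A \cap W} \setmin \ri{A \cap W}$: membership in $\rc{A \cap W}$ forces $x_{0} \in \rc{A}$ (by Point~1 applied to $A \cap W \inc A$) and $x_{0} \in \Aff{A \cap W} \inc W$, while $x_{0}$ cannot lie in $\ri{A}$, for otherwise Point~3a (whose second hypothesis $x_{0} \in \Aff{A \cap W}$ is already at hand) would put $x_{0} \in \ri{A \cap W}$, a contradiction. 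Hence $x_{0} \in (\rc{A} \setmin \ri{A}) \cap W = \rb{A} \cap W$, as desired.
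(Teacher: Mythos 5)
Your proposal is correct and follows essentially the same route as the paper: the identity $\rc{A}=\clos{A}\cap\Aff{A}$ for Points~1 and~2a, the ``a neighbourhood of the origin spans the ambient space'' fact from Remark~\ref{rem:0-tvs} for the forward half of 2b, and the deduction of 3b from Point~1 together with 3a. The only difference is cosmetic: you spell out the translation to the direction subspace $L$ in 2b and phrase 3b pointwise where the paper does the same computation with set algebra.
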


\medskip

\begin{proof}~\\ 
\textsf{Point~1.} 
Given subsets $A$ and $B$ of $V \!$ with $A \inc B$, we can write 
$$
\rc{A} \ = \ \clos{A} \cap \Aff{A} \ \inc \ \clos{B} \cap \Aff{B} \ = \ \rc{B}
$$ 
since we have ${\Aff{A} \inc {\Aff{B}} \;\!\!}$ and $\clos{A} \inc \clos{B}$. 

\medskip

\textsf{Point~2.a.} 
Using ${A \inc {\rc{A}} \.}$, we first get ${\Aff{A} \inc {\Aff{{\rc{A}} \.}} \.}$. 

\smallskip

Conversely, we have ${\rc{A} \inc {\Aff{A}} \;\!\!}$ 
by the very definition of ${{\rc{A}} \.}$, 
and hence one obtains the inclusion ${\Aff{{\rc{A}} \.} \inc \Aff{{\Aff{A}} \.} = {\Aff{A}} \.}$. 

\medskip

\textsf{Point~2.b.} 
If the open set ${{\ri{A}} \;\!\!}$ in ${{\Aff{A}} \;\!\!}$ is not empty, 
then we have ${\Aff{{\ri{A}} \.} = {\Aff{A}} \;\!\!}$ 
by Point~1.c in Remark~\ref{rem:0-tvs}, and $A$ is not empty since one has $\ri{A} \inc A$. 

\smallskip

Conversely, if we have $A \neq \Oset \.$ and ${\Aff{{\ri{A}} \.} = {\Aff{A}} \.}$, 
then we get $\Aff{{\ri{A}} \.} = \Aff{A} \neq \Oset \.$, and hence $\ri{A} \neq \Oset \.$ 
by using the obvious equality $\Aff{\Oset} = \Oset \.$. 

\medskip

\textsf{Point~3.a.} 
Since ${{\ri{A}} \;\!\!}$ is open in ${{\Aff{A}} \.}$, 
the intersection ${\ri{A} \cap {\Aff{A \cap W}} \;\!\!}$ 
is open in the subspace ${\Aff{A \cap W} \inc {\Aff{A}} \.}$. 

\smallskip

On the other hand, we have $\ri{A} \inc A$ and $\Aff{A \cap W} \inc \Aff{W} = W \!\!$, 
and hence the inclusion $\ri{A} \cap \Aff{A \cap W} \inc A \cap W \!$ holds. 

\smallskip

Therefore, we get ${\ri{A} \cap \Aff{A \cap W} \inc {\ri{A \cap W}} \;\!\!}$ 
since ${{\ri{A \cap W}} \;\!\!}$ is the largest open set in ${{\Aff{A \cap W}} \;\!\!}$ 
which is contained in $A \cap W \!\.$. 

\medskip

\textsf{Point~3.b.} 
We first have $\Aff{A \cap W} \inc \Aff{W} = W \!\!$, 
and hence ${{\rb{A \cap W}} \;\!\!}$ lies in $W \!\.$. 
On the other hand, combining Point~1 and Point 3.a yields 
\begin{eqnarray*} 
   \rb{A \cap W} 
   & = & \:\! 
   \rc{A \cap W} \!\. \setmin \:\! \ri{A \cap W} \\ 
   & \inc & 
   \rc{A \cap W} \!\. \setmin \:\! [\ri{A} \cap {\Aff{A \cap W}} \. ] \\ 
   & = & 
   [\rc{A \cap W} \!\. \setmin \:\! {\ri{A}} \. ] 
   \cup [\rc{A \cap W} \!\. \setmin \:\! {\Aff{A \cap W}} \. ] \\ 
   & = & 
   \rc{A \cap W} \!\. \setmin \:\! \ri{A} \\ 
   & & \mbox{(use ${\rc{A \cap W} \inc {\Aff{A \cap W}} \.}$)} \\ 
   & \inc & \rc{A} \!\. \setmin \:\! \ri{A} \ = \ \:\! {\rb{A}} \.~. 
\end{eqnarray*} 
\end{proof}

\bigskip

\begin{proposition} \label{prop:int-comp} 
   Let $X \!\.$ be a subset of a topological real vector space $V \!\!$ 
   and $A \.$ a subset of $X \. \cart \RR$ such that they satisfy $\Aff{A} = \Aff{X} \cart \RR$. 
   Then we have the following properties: 
   
   \smallskip
   
   \begin{enumerate}[(1)]
      \item The interior of $A \.$ in $X \. \cart \RR$ contains ${{\ri{A}} \.}$. 
      
      \medskip
      
      \item The closure of $A \.$ in $X \. \cart \RR$ lies in ${{\rc{A}} \.}$. 
   \end{enumerate} 
\end{proposition}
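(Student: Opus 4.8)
The plan is to reduce both statements to two elementary facts about subspace topologies, after noticing that the hypothesis places $X \cart \RR$ exactly ``between'' $A$ and its affine hull. Since $X \inc {\Aff{X}}$ by Definition~\ref{def:aff-hull}, we have the chain of inclusions
\[
A \ \inc \ X \cart \RR \ \inc \ \Aff{X} \cart \RR \ = \ {\Aff{A}} \.,
\]
the last equality being the hypothesis. (The reverse inclusion $\Aff{A} \inc \Aff{X} \cart \RR$ is automatic: from $A \inc X \cart \RR$ and monotonicity of the affine hull we get $\Aff{A} \inc \Aff{X \cart \RR}$, which equals $\Aff{X} \cart \RR$ by Proposition~\ref{prop:aff-cart} together with $\Aff{\RR} = \RR$; so the hypothesis really just says that $A$ is affinely full in the $X$-direction.) Moreover, by transitivity of the subspace topology, the topology $X \cart \RR$ inherits from $V \cart \RR$ agrees with the one it inherits from ${\Aff{A}} \.$, so the interior and closure of $A$ ``in $X \cart \RR$'' are unambiguous and may legitimately be compared with ${{\ri{A}} \;\!\!}$ and ${{\rc{A}} \;\!\!}$, which are by definition computed in ${{\Aff{A}} \.}$.

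For Point~1, I would invoke the general fact that whenever $A \inc Y \inc Z$ with the subspace topologies from $Z$, one has $\operatorname{int}_{Z}(A) \inc \operatorname{int}_{Y}(A)$: any subset $U \inc A$ that is open in $Z$ is also open in $Y$, because $U \inc A \inc Y$ forces $U = U \cap Y$, which is open in $Y$; since such a $U$ is contained in $A$, it lies in $\operatorname{int}_{Y}(A)$, and taking the union over all of them gives the inclusion. Applying this with $Y \as X \cart \RR$ and $Z \as {\Aff{A}}$ yields ${\ri{A} = \operatorname{int}_{\Aff{A}}(A) \inc \operatorname{int}_{X \cart \RR}(A)}$, which is precisely Point~1.

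For Point~2, I would use the complementary standard fact that the closure of a set in a subspace is the trace on that subspace of its closure in the ambient space: if $A \inc Y \inc Z$, then $\operatorname{cl}_{Y}(A) = \operatorname{cl}_{Z}(A) \cap Y \inc \operatorname{cl}_{Z}(A)$. With $Y \as X \cart \RR$ and $Z \as {\Aff{A}}$ this reads ${\operatorname{cl}_{X \cart \RR}(A) \inc \operatorname{cl}_{\Aff{A}}(A) = {\rc{A}} \.}$, i.e.\ Point~2.

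I do not expect any genuine obstacle here: this is essentially a bookkeeping lemma recording how the relative interior and relative closure interact with the product topology on $X \cart \RR$, and the only point needing a moment's care is verifying that $X \cart \RR$ is a genuine intermediate space between $A$ and ${{\Aff{A}} \;\!\!}$ --- which is where, and the only place, the hypothesis $\Aff{A} = \Aff{X} \cart \RR$ is used. The lemma will later be applied with $A$ taken to be $\Epi{f}$ (or its strict version) and $X = C$, in order to transfer relative-interior and relative-closure information about the epigraph into statements about its interior and closure in $C \cart \RR$ during the proof of the Main~Theorem.
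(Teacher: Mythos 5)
Your proof is correct and follows essentially the same route as the paper's: both rest on the chain $A \inc X \cart \RR \inc \Aff{X} \cart \RR = \Aff{A}$ together with the compatibility of the subspace topologies, and your Point~2 is literally the paper's computation $\operatorname{cl}_{X \cart \RR}(A) = \clos{A} \cap (X \cart \RR) \inc \rc{A}$. The only cosmetic difference is in Point~1, where the paper unwinds a basic product neighborhood $(U \cap \Aff{X}) \cart W$ of a relative-interior point by hand while you invoke the general monotonicity of the interior under shrinking the ambient space; the underlying content is identical.
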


\medskip

\begin{proof}~\\ 
\textsf{Point~1.} 
Given ${\;\!\! (x_{0} , r_{0}) \in {\ri{A}} \.}$, 
there exist a neighborhood $U \!$ of $x_{0}$ in $V \!$ 
and a neighborhood $W \!$ of $r_{0}$ in $\RR$ 
such that we have ${\;\!\! [U \. \cap {\Aff{X}} \. ] \cart W \. \inc A}$, which implies 
$$
(U \. \cap X) \cart W \! 
\ = \ \;\!\. 
(X \. \cart \RR) \cap [U \. \cap {\Aff{X}} \. ] \cart W \. 
\ \inc \ 
(X \. \cart \RR) \cap A \ = \ A~.
$$ 
Then, since $U \. \cap X \.$ is a neighborhood of $x_{0}$ in $X \.$, 
we get that $\;\!\! (x_{0} , r_{0}) \;\!\!$ is in the interior of $A$ with respect to $X \. \cart \RR$. 

\medskip

\textsf{Point~2.} 
From ${X \. \cart \RR \inc \Aff{X} \cart \RR = {\Aff{A}} \.}$, 
we deduce that the closure $\clos{A}^{X \. \cart \RR} \!$ of $A$ in $X \. \cart \RR$ satisfies 
$$
\clos{A}^{X \. \cart \RR} \! 
\ = \ 
\clos{A} \cap (X \. \cart \RR) 
\ \inc \ 
\clos{A} \cap \Aff{A} \ = \ \:\! {\rc{A}} \.~.
$$ 
\end{proof}

\bigskip

Now, here is the definition of a strictly convex set, which is the key notion of the present work. 

\bigskip

\begin{definition} \label{def:strict-cvx} 
   A subset $C \.$ of a topological real vector space $V \!$ is said to be \emph{strictly convex} 
   if for any two distinct points ${x , y \in {\rc{C}} \;\!\!}$ 
   one has ${\;\!\! {]x , y[} \inc {\ri{C}} \.}$. 
\end{definition}

\bigskip

\begin{remark*} 
~
\begin{enumerate}[1)]
\item It is to be noticed that strict convexity is a topological property 
whereas convexity is a mere affine property. 

\smallskip

\item A strictly convex set is of course convex. 

\smallskip

\item According to the common geometric intuition, 
saying that a subset $C \.$ of $V \!$ is strictly convex means that $C \.$ is convex 
and that there is no non-trivial segment in the relative boundary of~$C \.$. 
Owing to Proposition~16 in~\cite[Chapitre~II, page~15]{BouEVT81}, 
this is an easy consequence of the very definition of strict convexity. 

\smallskip

\item This definition coincides with the usual one 
when $V \!$ is the canonical topological real vector space $\Rn{n}$ 
since in this case the closeness of ${{\Aff{C}} \;\!\!}$ in $V \!$ yields $\rc{C} = \:\! \clos{C} \.$. 
\end{enumerate} 
\end{remark*}

\bigskip

\begin{proposition} \label{prop:ri-strict-cvx} 
   For any strictly convex subset $C \!$ of a topological real vector space $V \!\!$, 
   we have the implication 
   
   \smallskip
   
   \begin{center} 
   $C \ \neq \ \Oset \. \qquad \imp \qquad  \ri{C} \ \neq \ \Oset \.$. 
   \end{center} 
\end{proposition}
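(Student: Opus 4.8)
The plan is to argue by distinguishing two cases according to whether $C$ contains exactly one point or at least two; the heart of the matter is simply a direct application of Definition~\ref{def:strict-cvx}, so I do not expect any real obstacle beyond isolating the degenerate situation in which that definition is vacuous.

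First I would note that $C \inc \rc{C}$, since the relative closure of $C$ contains $C$. Suppose now that $C$ contains two distinct points $x$ and $y$. Then $x$ and $y$ are two distinct points of $\rc{C}$, so the strict convexity of $C$ gives ${]x , y[} \inc {\ri{C}}$; and since $x \neq y$, the open segment ${]x , y[}$ is non-empty (it contains, for instance, the midpoint of $x$ and $y$). Hence $\ri{C} \neq \Oset$ in this case.

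It then remains to treat the case where $C$ is reduced to a single point, say $C = \{ a \}$. In that case $\Aff{C} = \{ a \}$, so the relative topology of $\Aff{C}$ is the trivial topology on the one-point set $\{ a \}$, and therefore $\ri{C}$, being the interior of $C$ with respect to this topology, equals $C$ itself; in particular $\ri{C} = \{ a \} \neq \Oset$. Since $C \neq \Oset$ by hypothesis, exactly one of these two cases occurs, and in both $\ri{C} \neq \Oset$, which proves the implication. The only point requiring a little care is thus to remember that the strict convexity hypothesis says nothing when $C$ is a singleton, so that this case must be dealt with separately by hand.
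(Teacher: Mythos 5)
Your proof is correct and follows essentially the same route as the paper: a case split on whether $C$ is a singleton (handled via $\Aff{C} = C$) or contains two distinct points (handled by applying Definition~\ref{def:strict-cvx} to get a non-empty open segment inside $\ri{C}$). The paper phrases the dichotomy in terms of $\rc{C}$ rather than $C$, but since $C \neq \Oset$ these splits coincide, so the arguments are the same.
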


\medskip

\begin{proof}~\\ 
There are two cases to be considered, 
depending on whether ${{\rc{C}} \;\!\!}$ is a single point or not. 

\smallskip

If we have $\rc{C} = \;\!\! \{ x \} \.$, then $C \.$ also reduces to $\. \{ x \} \.$ 
since we have ${\Oset \. \neq \:\! C \inc {\rc{C}} \.}$. 
Therefore, we obtain $\Aff{C} = \;\!\! \{ x \} \.$, 
and this implies $\ri{C} = \;\!\! \{ x \} \. \neq \Oset \.$. 

\smallskip

On the other hand, if we have ${x , y \in {\rc{C}} \;\!\!}$ with $x \neq y$, 
then the inclusion ${\;\!\! {]x , y[} \inc {\ri{C}} \;\!\!}$ holds 
by strict convexity of $C \.$, and hence one has $\ri{C} \neq \Oset \.$ 
since $\;\!\! {]x , y[} \;\!\!$ is not empty. 
\end{proof}

\bigskip

\begin{remark} 
When dealing with a single strictly convex subset $C \.$ 
of a general topological real vector space $V \!\!$, 
we will always assume in the hypotheses that $C \.$ has a non-empty interior in $V \!$ 
in order to insure $\Aff{C} = V \!\!$, and this makes sense by Proposition~\ref{prop:ri-strict-cvx} 
and Point~2.b in Proposition~\ref{prop:ri-rc-rb}. 
\end{remark}

\bigskip

We shall now prove that strict convexity is a two-dimensional (topological) notion 
whereas convexity is---by its very definition---a one-dimensional (affine) notion. 

\bigskip

\begin{proposition} 
   Given a topological real vector space $V \!\!$ with $\dim{V} \geq \:\! 2$, 
   any subset $C \!$ of $V \!\!$ whose interior is not empty satisfies the equivalence 
   
   \smallskip
   
   \begin{center} 
      $C \!$ is strictly convex 
      \quad $\iff$ \quad 
      $C \cap P \!$ is strictly convex for any affine plane $P \!$ in $V \!\!$~. 
   \end{center} 
\end{proposition}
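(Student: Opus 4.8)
The plan is to prove the two implications separately. The forward implication ($C$ strictly convex $\imp$ $C \cap P$ strictly convex for every affine plane $P$) should follow almost formally from the behaviour of $\mathrm{ri}$ and $\mathrm{rc}$ under intersection with affine subspaces. Indeed, suppose $C$ is strictly convex, let $P$ be an affine plane in $V$, and take two distinct points $x,y \in \rc{C \cap P}$. Since $\rc{C \cap P} \inc \rc{C}$ (by Point~1 of Proposition~\ref{prop:ri-rc-rb}, applied to $C \cap P \inc C$), we have $x,y \in \rc{C}$, so strict convexity of $C$ gives ${]x,y[} \inc \ri{C}$. On the other hand $x,y \in P$, hence the whole segment ${]x,y[}$ lies in $P$, and in fact in $\Aff{C \cap P}$; then Point~3.a of Proposition~\ref{prop:ri-rc-rb} (with $W = P$, noting $\Aff{C \cap P} \inc \Aff{P} = P$) gives ${]x,y[} \inc \ri{C} \cap \Aff{C \cap P} \inc \ri{C \cap P}$. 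This is exactly strict convexity of $C \cap P$. (One should double-check that ${]x,y[}$ is really contained in $\Aff{C\cap P}$: since $x,y \in \rc{C\cap P} \inc \Aff{C\cap P}$ by definition of $\rc{}$, the segment joining them lies in that affine subspace.)

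For the converse implication ($C \cap P$ strictly convex for every affine plane $P$ $\imp$ $C$ strictly convex), I would first observe that $C$ is convex: convexity is a one-dimensional affine property, so if every planar section $C \cap P$ is convex (which it is, being strictly convex), then in particular every line section $C \cap L$ is convex (take any plane through $L$), hence $C$ is convex. Now take two distinct points $x, y \in \rc{C}$; I must show ${]x,y[} \inc \ri{C}$. The idea is to choose an affine plane $P$ containing $x$ and $y$ that "sees" enough of the interior of $C$. Since $C$ has non-empty interior and $\dim V \geq 2$, pick a point $c \in \intr C$; if $x, y, c$ are affinely independent, let $P$ be the plane they span, otherwise ($x,y,c$ collinear) enlarge by one more interior point or an independent direction to get a plane $P$ through $x, y$ with $P \cap \intr C \neq \Oset$ (so that $\Aff{C \cap P} = P$ and $\ri{C \cap P}$ is the ordinary interior of $C \cap P$ in $P$). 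The key points to verify are then: (i) $x, y \in \rc{C \cap P}$, so that the hypothesis applies to this $P$; and (ii) $\ri{C \cap P} \inc \ri{C}$, so that the conclusion ${]x,y[} \inc \ri{C \cap P}$ transfers back to $C$.

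Point (ii) should be routine: if $z \in \ri{C \cap P}$ then $z$ is interior to $C \cap P$ relative to $P$, and combined with a nearby genuine interior point of $C$ and convexity of $C$ one pushes $z$ slightly into $\intr C$; more carefully, using that $P \cap \intr C \neq \Oset$ and convexity, one gets $\ri{C \cap P} \inc \intr C \inc \ri C$. \textbf{The main obstacle is Point~(i):} given $x \in \rc C$, I need $x \in \rc{C \cap P}$, i.e.\ $x$ must be a relative-closure point of the \emph{planar slice}, not merely of $C$. This requires choosing $P$ so that $C \cap P$ actually accumulates at $x$ within $P$ — equivalently, so that the "approach directions" to $x$ inside $C$ are not all transverse to $P$. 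The way to arrange this is to route $P$ through an interior point $c$ of $C$: then the half-open segment ${[c, x[}$ lies in $\intr C \cap P$ by convexity (and this needs the standard fact, provable from Remark~\ref{rem:0-tvs}, that the segment from an interior point to a closure point lies in the interior except possibly at the endpoint), so $x$ is a limit along $P$ of points of $C \cap P$, giving $x \in \rc{C \cap P}$; similarly $y \in \rc{C \cap P}$, provided $P$ was chosen to contain $c$, $x$ and $y$ simultaneously — which is why $P$ is taken as (a plane inside) the affine span of $\{c, x, y\}$, enlarged if these three points happen to be collinear. Once both (i) and (ii) are in place, the hypothesis applied to this particular $P$ yields ${]x,y[} \inc \ri{C\cap P} \inc \ri C$, completing the proof.
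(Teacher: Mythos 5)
Your proposal is correct and follows essentially the same route as the paper: the forward direction via monotonicity of $\mathrm{rc}$ and Point~3.a of Proposition~\ref{prop:ri-rc-rb}, and the converse by passing a plane through $x$, $y$ and an interior point $c$, using the segment-from-an-interior-point fact (Bourbaki's Proposition~16) to get $x,y\in\rc{C\cap P}$ and to push $\ri{C\cap P}$ into $\intr{C}$. The only cosmetic difference is that you obtain ${]x,y[}\inc\Aff{C\cap P}$ directly from $x,y\in\rc{C\cap P}$, where the paper first shows $\Aff{C\cap P}=P$ via the non-emptiness of $\intr{C}\cap P$.
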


\medskip

\begin{proof}~\\ 
\textasteriskcentered \ ($\imp$) 
Let ${x , y \in {\rc{C \cap P}} \;\!\!}$ with $x \neq y$. 

\smallskip

Then, we first have $x , y \in \clos{C} \.$ 
since the inclusion $\rc{C \cap P} \inc \:\! \clos{C} \.$ holds 
according to Point~1 in Proposition~\ref{prop:ri-rc-rb}, 
and this yields ${\;\!\! {]x , y[} \inc \:\! \intr{C} \.}$ by strict convexity of $C \.$. 

\smallskip

On the other hand, we have $x , y \in \. P \.$ 
from ${\rc{C \cap P} \inc \Aff{C \cap P} \inc \Aff{P} = P \!\.}$, 
and hence we get ${\;\!\! {]x , y[} \inc \:\! \intr{C} \cap P \.}$ by convexity of $P \!\.$. 

\smallskip

In particular, the open set ${\intr{C} \cap P \.}$ of $P \.$ is not empty, 
and this implies ${\Aff{\! \intr{C} \cap P \!} = P \.}$ by Point~3 in Remark~\ref{rem:0-tvs}, 
which yields ${P \inc {\Aff{C \cap P}} \;\!\!}$ since we have $\intr{C} \inc \:\! C \.$. 

\smallskip

Therefore, we get 
${\intr{C} \cap P \inc \:\! \intr{C} \cap \Aff{C \cap P} \inc {\ri{C \cap P}} \;\!\!}$ 
by Point~3.a in Proposition~\ref{prop:ri-rc-rb}, 
which gives ${\;\!\! {]x , y[} \inc {\ri{C \cap P}} \.}$. 

\smallskip

This proves that $C \cap P \.$ is strictly convex. 

\medskip

\textasteriskcentered \ ($\con$) 
Let $x , y \in \clos{C} \.$ with $x \neq y$. 

\smallskip

Since the dimension of ${V \! = {\Aff{C}} \;\!\!}$ is greater than one, 
$C \.$ does not lie in a line, 
and hence there exists a point $z \in \intr{C} \.$ such that $x , y , z$ are not collinear. 
Therefore, ${P \as {\Aff{\. \{ x , y , z \} \.}} \;\!\!}$ is an affine plane in $V \!\.$. 

\smallskip

Then, Proposition~16 in~\cite[Chapitre~II, page~15]{BouEVT81} implies that the open segments 
$\;\!\! {]z , x[} \;\!\!$ and $\;\!\! {]z , y[} \;\!\!$ are in $\intr{C} \cap P \!\.$, 
and hence in $C \cap P \!\.$. 
This insures that ${{\rc{]z , x[}} \;\!\!}$ and ${{\rc{]z , y[}} \;\!\!}$ 
are in ${{\rc{C \cap P}} \.}$. 

\smallskip

But we have ${x \in {\rc{]z , x[}} \;\!\!}$ since any neighborhood $U \!$ of $x$ in $V \!$ 
satisfies ${\;\!\! {]z , x[} \cap U \. = \Oset \.}$ by Point~1.a in Remark~\ref{rem:0-tvs}. 
And the same argument yields ${y \in {\rc{]z , y[}} \.}$. 

\smallskip

So, we have obtained ${x , y \in {\rc{C \cap P}} \.}$, 
and hence ${\;\!\! {]x , y[} \inc {\ri{C \cap P}} \;\!\!}$ since $C \cap P \.$ is strictly convex. 

\smallskip

Now, if we pick ${u \in {]x , y[} \;\!\!}$ and define $v \as u - z$, 
then there exists a number ${t > 0}$ that satisfies 
$w \as u + t v \in C \cap P \.$ since $u + \RR v$ is the straight line 
passing through ${u , z \in C \cap P \inc {\Aff{C \cap P}} \;\!\!}$ 
and since $C \cap P \.$ is a neighborhood of ${u \in {\ri{C \cap P}} \;\!\!}$ 
in ${{\Aff{C \cap P}} \.}$. 

\smallskip

Therefore, the segment $\;\!\! [z , w] \;\!\!$ lies in the convex set $C \cap P \!\.$, 
and hence in $C \.$, which yields 
$\;\!\! [z , u] \inc [z , w] \. \setmin \{ w \} \inc \:\! \intr{C} \.$ 
by Proposition~16 in \cite[Chapitre~II, page~15]{BouEVT81}. 

\smallskip

This proves that $C \.$ is strictly convex. 
\end{proof}

\bigskip

\begin{remark*} 
In case $V \!$ is one-dimensional but its topology is not Hausdorff, 
the strictly convex subsets of $V \!\!$, unlike those of $\RR$ (endowed with its usual topology), 
do \emph{not} coincide with its convex subsets. 
Indeed, if the topology of $V \!$ is for example trivial, 
then the only strictly convex subsets of $V \!$ are the empty set and $V \!$ itself. 
\end{remark*}

\bigskip

Finally, in order to be complete, let us recall the definition of a (strictly) convex function. 

\bigskip

\begin{definition} 
   Given a convex subset $C \.$ of a real vector space $V \!\!$, 
   a function ${\. f \. : C \to \RR}$ is said to be 
   
   \smallskip
   
   \begin{enumerate}[(1)]
      \item \emph{convex} if we have 
      $f \. ( \. ( \:\!\. 1 \;\!\! - t) x + t y) 
      \leq 
      ( \:\!\. 1 \;\!\! - t) \:\!\! f \. (x) \;\!\! + t \:\!\! f \. (y)$ 
      for any points $x , y \in C \.$ and any number $t \in (0 , \;\!\! 1 \:\!\. ) \.$, and 
      
      \smallskip
      
      \item \emph{strictly convex} if we have 
      $f \. ( \. ( \:\!\. 1 \;\!\! - t) x + t y) 
      < 
      ( \:\!\. 1 \;\!\! - t) \:\!\! f \. (x) \;\!\! + t \:\!\! f \. (y)$ 
      for any distinct points $x , y \in C \.$ and any number $t \in (0 , \;\!\! 1 \:\!\. ) \.$. 
   \end{enumerate} 
\end{definition}

\bigskip

\begin{remark} \label{rem:sc} 
~
\begin{enumerate}[1)]
\item It is to be noticed that both convexity and strict convexity of functions 
are mere affine notions. 

\medskip

\item A strictly convex function is of course convex. 

\medskip

\item Given a convex subset $C \.$ of a real vector space $V \!\!$, 
a function $\. f \. : C \to \RR$ is convex if and only if one has 
\[
f \!\. \left( \! \sum_{i = \. 1}^{n} \l_{i} x_{i} \!\! \right) \!\!\: 
\ \leq \ \!\. 
\sum_{i = \. 1}^{n} \l_{i} f \. (x_{i}) 
\qquad \mbox{(Jensen's inequality)}
\] 
for any integer $n \geq \;\!\! 1$, any points $\llist{x}{1}{n \!} \in C \.$, 
and any numbers $\llist{\l}{1}{n \!} \in [0 , {+\infty}) \;\!\!$ 
which satisfy $\disp \,\!\! \sum_{i = \. 1}^{n} \l_{i} = \;\!\! 1$. 

\smallskip

This is obtained by induction on $n \geq \;\!\! 1$. 

\medskip

\item Given a convex subset $C \.$ of a real vector space $V \!\!$, 
a convex function $\. f \. : C \to \RR$, 
a subset $A \inc \:\! C \.$ and a real number $M > 0$, we have the following equivalence: 
$$
\big( \forall x \in \. A, \ \ \. f \. (x) \ \leq \ M \big) 
\qquad \iff \qquad 
\big( \forall x \in \Conv{A} \! , \ \ \. f \. (x) \ \leq \ M \big) \.~,
$$ 
where ${{\Conv{A}} \;\!\!}$ stands for the convex hull of $A$, 
\ie\!\!, the smallest convex subset of $V \!$ which contains $A$. 

\smallskip

Indeed, it is a well-known fact that each ${x \in {\Conv{A}} \;\!\!}$ 
writes ${\disp x = \!\!\. \sum_{i = \. 1}^{n} \l_{i} x_{i}}$ 
for some integer $n \geq \;\!\! 1$, some points ${\llist{x}{1}{n \!} \in \. A}$ 
and some numbers ${\llist{\l}{1}{n \!} \in [0 , {+\infty}) \;\!\!}$ 
which satisfy ${\disp \,\!\! \sum_{i = \. 1}^{n} \l_{i} = \;\!\! 1}$.
Therefore, this implies ${\disp \. f \. (x) \leq \!\. \sum_{i = \. 1}^{n} \l_{i} f \. (x_{i}) 
\leq \!\. \sum_{i = \. 1}^{n} \l_{i} M = M}$ by using Point~3 above. 
\end{enumerate} 
\end{remark}

\bigskip
\bigskip


\section{Proofs} \label{sec:proofs} 

This section is devoted to the proofs of the Main~Theorem and of all its consequences 
that we mentionned in Section~\ref{sec:main-thm}. 

\bigskip

Let us first begin with the following affine property. 

\bigskip

\begin{lemma} \label{lem:rb-line-epi} 
   For any function $\. f \. : S \to \RR$ defined on a subset $S \!$ of a real vector space, 
   we have $\Aff{{\Epi{f}} \.} = \Aff{S} \cart \RR$. 
\end{lemma}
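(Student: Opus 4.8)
The plan is to prove the equality $\Aff{{\Epi{f}} \.} = \Aff{S} \cart \RR$ by a double inclusion, using Proposition~\ref{prop:aff-cart} as the main tool and the elementary fact that $\Aff{\RR} = \RR$ (since $\RR$ is itself an affine subspace of $\RR$, indeed of any line, and $\Epi{f}$ projects onto all of $\RR$ in its second coordinate whenever $S \neq \Oset$).

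First I would dispose of the trivial case $S = \Oset$, where both sides are empty. Assuming $S \neq \Oset$, the strategy is to sandwich $\Epi{f}$ between two sets whose affine hulls are easy to compute. On the one hand, $\Epi{f} \inc S \cart \RR$, so $\Aff{{\Epi{f}} \.} \inc \Aff{S \cart \RR} = \Aff{S} \cart \Aff{\RR} = \Aff{S} \cart \RR$ by Proposition~\ref{prop:aff-cart}. On the other hand, for the reverse inclusion I would exhibit a subset $A \inc \Epi{f}$ with $\Aff{A} = \Aff{S} \cart \RR$; the natural choice is $A \as \{ (x , r) \st x \in S, \ r \geq f \. (x) \}$, but more economically one can take $A \as \{ (x , f \. (x)) \st x \in S \} \cup \{ (x , f \. (x) + 1) \st x \in S \}$, i.e. the graph of $f$ together with a vertical unit translate of it, both of which lie in $\Epi{f}$.

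The key computation is then that $\Aff{A} = \Aff{S} \cart \RR$. For each fixed $x \in S$, the two points $(x , f \. (x))$ and $(x , f \. (x) + 1)$ lie in $A$, so the whole vertical line $\{ x \} \cart \RR$ is contained in $\Aff{A}$; taking the union over $x \in S$ shows $S \cart \RR \inc \Aff{A}$, whence $\Aff{S \cart \RR} \inc \Aff{A}$, i.e. $\Aff{S} \cart \RR \inc \Aff{A}$ again by Proposition~\ref{prop:aff-cart}. Since also $A \inc \Epi{f} \inc S \cart \RR$, we get $\Aff{A} \inc \Aff{S} \cart \RR$, so in fact $\Aff{A} = \Aff{S} \cart \RR$. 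Combining with $A \inc \Epi{f}$ gives $\Aff{S} \cart \RR = \Aff{A} \inc \Aff{{\Epi{f}} \.}$, and together with the first inclusion this yields the claimed equality.

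I do not expect any serious obstacle here: the statement is a purely affine bookkeeping fact, and the only mild subtlety is remembering to handle $S = \Oset$ separately and to invoke Proposition~\ref{prop:aff-cart} (rather than re-proving $\Aff{S \cart \RR} = \Aff{S} \cart \RR$ by hand). If one prefers to avoid citing that proposition, the direct argument is just as short: $\Aff{{\Epi{f}} \.}$ contains all vertical lines $\{ x \} \cart \RR$ over $x \in S$ and is contained in the affine span of those lines (which equals $\Aff{S} \cart \RR$ by projecting), so equality follows; but using the already-established Proposition~\ref{prop:aff-cart} keeps the proof cleanest.
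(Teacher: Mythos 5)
Your proof is correct and follows essentially the same route as the paper's: both establish $\Aff{{\Epi{f}}\.} \inc \Aff{S} \cart \RR$ from $\Epi{f} \inc S \cart \RR$, and both obtain the reverse inclusion by noting that $\Epi{f}$ contains, above each $x \in S$, two points with distinct second coordinates (the paper uses the segment $\{x\} \cart [f(x), f(x)+1]$, you use its endpoints), so that every vertical line $\{x\} \cart \RR$ lies in the affine hull, and then invoking Proposition~\ref{prop:aff-cart}. The separate treatment of $S = \Oset$ is a harmless extra precaution.
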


\medskip

\begin{proof}~\\ 
We obviously have $\Epi{f} \inc S \cart \RR \inc \Aff{S} \cart \RR$, 
and hence $\Aff{{\Epi{f}} \.} \inc \Aff{S} \cart \RR$. 

\smallskip

Conversely, given $x \in S$, we have 
${\. \{ x \} \. \cart [f \. (x) , \. f \. (x) \;\!\! + \;\!\! 1 \:\!\. ] \inc {\Epi{f}} \;\!\!}$ 
by the very definition of ${{\Epi{f}} \.}$, which gives ${\. \{ x \} \. \cart \RR 
= \Aff{\. \{ x \} \. \cart [f \. (x) , \. f \. (x) \;\!\! + \;\!\! 1 \:\!\. ]} 
\inc {\Aff{{\Epi{f}} \.}} \;\!\!}$ by Proposition~\ref{prop:aff-cart}. 
Therefore, we get ${S \cart \RR \inc {\Aff{{\Epi{f}} \.}} \.}$, 
and this yields ${\Aff{S} \cart \RR = \Aff{S \cart \RR} \inc {\Aff{{\Epi{f}} \.}} \;\!\!}$ 
by Proposition~\ref{prop:aff-cart} once again. 
\end{proof}

\bigskip

Then, let us establish two technical but useful topological properties. 

\bigskip

\begin{lemma} \label{lem:epi-closed} 
   Given a subset $S \!$ of a topological real vector space $V \!\!$ 
   and a function $\. f \. : S \to \RR$, 
   we have the implication 
   $$
   \left( \!\!\! 
   \begin{array}{c} 
      \vspace{5pt} 
      S \! \ \mbox{is open in} \ {\Aff{S}} \.~, \\ 
      \vspace{5pt} 
      f \! \ \mbox{is lower semi-continuous}~, \\ 
      \forall x_{0} \in \rb{S} \inc \clos{S}, \ \ \. 
      f \. (x) \ \to \ \:\! {+\infty} \quad \mbox{as} \quad x \ \to \ x_{0} 
   \end{array} \!\! 
   \right) 
   \: \imp \: 
   {\Epi{f}} \;\!\! \ \mbox{is closed in} \ \Aff{S} \cart \RR~.
   $$ 
\end{lemma}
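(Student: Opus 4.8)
The goal is to show that under the three hypotheses (openness of $S$ in $\Aff{S}$, lower semi-continuity of $f$, and blow-up of $f$ near the relative boundary of $S$), the epigraph is closed in $\Aff{S} \cart \RR$. The natural approach is to take a point $(x_{0} , r_{0}) \in \Aff{S} \cart \RR$ lying in the closure of $\Epi{f}$ (with respect to the subspace topology of $\Aff{S} \cart \RR$) and prove that it actually belongs to $\Epi{f}$. There are two cases depending on whether $x_{0}$ lies in $S$ or not, and the key dichotomy is supplied by the decomposition $\clos{S} = S \cup \rb{S}$ valid inside $\Aff{S}$ when $S$ is open in $\Aff{S}$: a boundary point of $S$ relative to $\Aff{S}$ is precisely a point of $\clos{S}$ not in $S$ (here $\ri{S} = S$ because $S$ is open in $\Aff{S}$).

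First I would dispose of the case $x_{0} \notin S$. The projection $\Aff{S} \cart \RR \to \Aff{S}$ is continuous, so if $(x_{0} , r_{0})$ is in the closure of $\Epi{f} \inc S \cart \RR$, then $x_{0}$ is in the closure of $S$ in $\Aff{S}$, hence $x_{0} \in \rb{S}$. By the third hypothesis, $f(x) \to +\infty$ as $x \to x_{0}$, and Proposition~\ref{prop:epi-infinity} (applied with the topological space $X \as \Aff{S}$ and $A \as S$) tells us exactly that $(x_{0} , r) \notin \clos{\Epi{f}}$ for every $r \in \RR$ — a contradiction. So this case cannot occur.

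Next, the case $x_{0} \in S$. Here I want to use lower semi-continuity to conclude $f(x_{0}) \leq r_{0}$. Suppose for contradiction that $f(x_{0}) > r_{0}$, and pick $\e > 0$ with $f(x_{0}) - \e > r_{0}$. Lower semi-continuity at $x_{0}$ gives a neighborhood $V$ of $x_{0}$ in $S$ (equivalently in $\Aff{S}$, since $S$ is open in $\Aff{S}$) with $f(x) > f(x_{0}) - \e$ for all $x \in V$. Then the set $V \cart ( \. {-\infty} , f(x_{0}) - \e)$ is a neighborhood of $(x_{0} , r_{0})$ in $\Aff{S} \cart \RR$ that misses $\Epi{f}$ entirely: if $(x , r)$ lies in it then $x \in V$, so $f(x) > f(x_{0}) - \e > r$, meaning $(x , r) \notin \Epi{f}$. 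This contradicts $(x_{0} , r_{0}) \in \clos{\Epi{f}}$. Hence $f(x_{0}) \leq r_{0}$, i.e. $(x_{0} , r_{0}) \in \Epi{f}$, as desired.

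I expect the main (minor) obstacle to be bookkeeping about which ambient space the topological notions refer to: everything must be done relative to $\Aff{S}$ and $\Aff{S} \cart \RR$ rather than $V$ and $V \cart \RR$, and one must note that $\Aff{{\Epi{f}} \.} = \Aff{S} \cart \RR$ (Lemma~\ref{lem:rb-line-epi}) so that the relative boundary/closure machinery of Section~\ref{sec:strict-convexity} applies, and that openness of $S$ in $\Aff{S}$ makes $\ri{S} = S$ and hence $\rb{S} = \clos{S}^{\Aff{S}} \setmin S$. Beyond that, the argument is a routine neighborhood-chasing exercise, with Proposition~\ref{prop:epi-infinity} doing the real work in the boundary case.
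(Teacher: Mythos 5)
Your proposal is correct and follows essentially the same route as the paper's proof: decompose $\clos{S}\cap\Aff{S}$ as $S\cup\rb{S}$ using openness of $S$ in $\Aff{S}$, rule out the boundary case via Proposition~\ref{prop:epi-infinity} and the blow-up hypothesis, and handle the interior case by a lower semi-continuity neighborhood argument contradicting $f(x_{0})>r_{0}$. The only differences are cosmetic (your neighborhood $V\cart(-\infty,f(x_{0})-\e)$ versus the paper's rectangle $U\cart(\a-\e,\a+\e)$).
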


\smallskip

\begin{proof}~\\ 
Assume that all the hypotheses are satisfied, and let ${\;\!\! (a , \a) \in {\rc{{\Epi{f}} \.}} \.}$. 

\smallskip

First of all, notice that $a$ is in ${{\rc{S}} \;\!\!}$ 
since the projection of $V \. \cart \RR$ onto $V \!$ is continuous and since we have 
$\Aff{{\Epi{f}} \.} = \Aff{S} \cart \RR$ by Lemma~\ref{lem:rb-line-epi}. 

\smallskip

If we had ${a \not \in S = {\ri{S}} \;\!\!}$ 
(remember that $S$ is open in ${{\Aff{S}} \.}$), then we would get ${a \in {\rb{S}} \.}$, 
and hence $\. f \. (x) \to \:\! {+\infty}$ as $x \to x_{0} \as a$, 
which yields ${\;\!\! (a , \a) \not \in {\rc{{\Epi{f}} \.}} \;\!\!}$ 
by Proposition~\ref{prop:epi-infinity}, a contradiction. 

\smallskip

Therefore, the point $a$ is necessarily in $S \.$. 

\smallskip

Now, assume that we have $\. f \. (a) > \:\! \a$, 
and let $\e \:\!\. \as \;\!\! (f \. (a) \;\!\! - \a) \! / \. 2 > 0$. 

\smallskip

Since $\. f \.$ is lower semi-continuous at $a$, 
there exists a neighborhood $U \!$ of $a$ in $S$ that satisfies the inclusion 
$\. f \. (U) \inc [f \. (a) \;\!\! - \e \, , \, {+\infty}) = \;\!\. [\a + \e \, , \, {+\infty}) \.$. 

\smallskip

But $S$ is a neighborhood of $a$ in ${{\Aff{S}} \.}$, and hence so is $U \!$. 

\smallskip

Thus, combining ${\;\!\! (a , \a) \in {\rc{{\Epi{f}} \.}} \;\!\!}$ 
and $\Aff{{\Epi{f}} \.} = \Aff{S} \cart \RR$, 
we can find $x \in U \!$ and $\l \in (\a - \e \, , \, \a + \e) \;\!\!$ with $\. f \. (x) \leq \:\! \l$, 
which yields $\. f \. (x) \in ( \. {-\infty} \, , \, \a + \e) \.$, 
contradicting the inclusion above. 

\smallskip

So, we necessarily have $\. f \. (a) \leq \:\! \a$, 
or equivalently ${\;\!\! (a , \a) \in {\Epi{f}} \.}$. 

\smallskip

Conclusion: we proved ${\rc{{\Epi{f}} \.} \inc {\Epi{f}} \.}$, 
which means that ${{\Epi{f}} \;\!\!}$ is closed in $\Aff{{\Epi{f}} \.} = \Aff{S} \cart \RR$. 
\end{proof}

\bigskip

\begin{lemma} \label{lem:epi-strict} 
   For any function $\. f \. : C \to \RR$ defined on a convex subset $C \!$ 
   of a topological real vector space $V \!\!$, we have (1)$\imp$(2)$\imp$(3)$\imp$(4) with 
   
   \smallskip
   
   \begin{enumerate}[(1)]
      \item ${{\Epi{f}} \;\!\!}$ is strictly convex, 
      
      \medskip
      
      \item $\forall s > 0, \ \tau_{\! s}({\rc{{\Epi{f}} \.}} \. ) 
      \inc \ri{{\Epi{f}} \.} \inc \ri{C} \cart \RR$, 
      
      \medskip
      
      \item $\rc{{\Epi{f}} \.} \inc \ri{C} \cart \RR$, and 
      
      \medskip
      
      \item $C \! \ \mbox{is open in} \ {\Aff{C}} \.$, 
   \end{enumerate} 
   
   \smallskip
   
   where for each $s \in \RR$ the map $\tau_{\! s} : V \. \cart \RR \to V \. \cart \RR$ 
   is defined by $\tau_{\! s}(x , r) \. \as \;\!\! (x , r + s) \.$. 
\end{lemma}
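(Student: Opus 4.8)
The plan is to prove the three implications in turn, with essentially all the content concentrated in (1)$\imp$(2); the other two will be one-line deductions. Two facts will be used throughout. The first is that the coordinate projection $p : \Aff{C} \cart \RR \to \Aff{C}$ is continuous, surjective and \emph{open}, the topology on the product subset $\Aff{C} \cart \RR \inc V \. \cart \RR$ being the product of the induced topologies. The second --- the only point that needs a short verification --- is that ${\rc{{\Epi{f}} \.}}$ is stable under the upward translations $\tau_{\! s}$ for every $s \geq 0$; I would check this by observing that $\tau_{\! s}({\Epi{f}} \.) \inc {\Epi{f}} \;\!\!$ when $s \geq 0$, while $\tau_{\! s}$ is a homeomorphism of $V \. \cart \RR$ mapping $\Aff{{\Epi{f}} \.} = \Aff{C} \cart \RR$ (Lemma~\ref{lem:rb-line-epi}) onto itself, so that $\tau_{\! s}(\clos{\Epi{f}}) = \clos{\tau_{\! s}({\Epi{f}} \.)} \inc \clos{\Epi{f}}$ and, upon intersecting with $\Aff{C} \cart \RR$, $\tau_{\! s}(\rc{{\Epi{f}} \.}) \inc \rc{{\Epi{f}} \.}$.

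For (1)$\imp$(2) I would fix $s > 0$ and an arbitrary point ${\;\!\! (x , r) \in \rc{{\Epi{f}} \.}}$. By the stability just noted, ${\tau_{\! 2 s}(x , r) = \;\!\! (x , r + 2 s)}$ again lies in ${\rc{{\Epi{f}} \.}}$ and is distinct from $(x , r)$, so strict convexity of ${{\Epi{f}} \;\!\!}$ (Definition~\ref{def:strict-cvx}) places the whole open segment ${\;\!\! {](x , r) , (x , r + 2 s)[}}$ inside ${\ri{{\Epi{f}} \.}}$; since its midpoint is $\tau_{\! s}(x , r)$, this gives the first inclusion $\tau_{\! s}(\rc{{\Epi{f}} \.}) \inc \ri{{\Epi{f}} \.}$. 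For the remaining inclusion I would invoke the openness of $p$: as ${\ri{{\Epi{f}} \.}}$ is open in $\Aff{{\Epi{f}} \.} = \Aff{C} \cart \RR$, the set $p(\ri{{\Epi{f}} \.})$ is open in $\Aff{C}$, and it is contained in $p({\Epi{f}} \.) = C$; by maximality of the relative interior this forces $p(\ri{{\Epi{f}} \.}) \inc {\ri{C}} \;\!\!$, i.e. every ${\;\!\! (x , r) \in \ri{{\Epi{f}} \.}}$ has $x \in {\ri{C}} \;\!\!$, whence ${\ri{{\Epi{f}} \.}} \inc \ri{C} \cart \RR$.

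The implication (2)$\imp$(3) is then immediate: for ${\;\!\! (x , r) \in \rc{{\Epi{f}} \.}}$, applying (2) with $s = 1$ yields $x \in {\ri{C}} \;\!\!$, hence $(x , r) \in \ri{C} \cart \RR$. For (3)$\imp$(4) I would project once more: $C = p({\Epi{f}} \.) \inc p(\rc{{\Epi{f}} \.}) \inc p(\ri{C} \cart \RR) = {\ri{C}} \;\!\!$, using ${\Epi{f}} \inc \rc{{\Epi{f}} \.}$ and (3); together with the trivial inclusion $\ri{C} \inc C$ this gives $C = {\ri{C}} \;\!\!$, which is exactly the assertion that $C \.$ is open in ${{\Aff{C}} \.}$. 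The only step here that is not purely formal is the $\tau_{\! s}$-stability of ${\rc{{\Epi{f}} \.}}$ for $s \geq 0$, which is where Lemma~\ref{lem:rb-line-epi} and the compatibility of translations with affine hulls genuinely enter; everything else is bookkeeping, so I do not anticipate a real obstacle.
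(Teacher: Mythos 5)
Your proposal is correct and follows essentially the same route as the paper: upward-translation stability of $\rc{{\Epi{f}} \.}$, the midpoint of the vertical segment from $(x,r)$ to $\tau_{2s}(x,r)$ landing in $\ri{{\Epi{f}} \.}$ by strict convexity, and then projection/translation bookkeeping for (2)$\imp$(3)$\imp$(4). The only cosmetic difference is that you justify $\ri{{\Epi{f}} \.} \inc \ri{C} \cart \RR$ via openness of the coordinate projection, where the paper simply declares it straightforward from $\Aff{{\Epi{f}} \.} = \Aff{C} \cart \RR$.
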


\medskip

\begin{proof}~\\ 
\textsf{Point~1 $\imp$ Point~2.} 
Since for each ${s \in \RR}$ the map $\tau_{\! s}$ is an affine homeomorphism, we have the equality 
${\tau_{\! s}({\rc{{\Epi{f}} \.}} \. ) = \:\! {\rc{\. \tau_{\! s}({\Epi{f}} \. ) \.}} \.}$. 

\smallskip

Then, for any $s \geq 0$, we obtain 
${\tau_{\! s}({\rc{{\Epi{f}} \.}} \. ) \inc {\rc{{\Epi{f}} \.}} \;\!\!}$ 
since one has ${\tau_{\! s}({\Epi{f}} \. ) \inc {\Epi{f}} \.}$. 

\smallskip

Hence, given ${s > 0}$ and ${\;\!\! (x , r) \in {\rc{{\Epi{f}} \.}} \.}$, 
we can write ${\;\!\! (x , r + \;\!\! 2 s) = \:\! \tau_{\. 2 s}(x , r) \in {\rc{{\Epi{f}} \.}} \.}$, 
which implies that the midpoint ${\tau_{\! s}(x , r) = \;\!\. (x , r + s) \;\!\!}$ 
is in ${{\ri{{\Epi{f}} \.}} \;\!\!}$ since ${{\Epi{f}} \;\!\!}$ is strictly convex. 
This proves the first inclusion. 

\smallskip

The second inclusion is straightforward since we have ${\Aff{{\Epi{f}} \.} = \Aff{C} \cart \RR}$ 
by Lemma~\ref{lem:rb-line-epi}. 

\medskip

\textsf{Point~2 $\imp$ Point~3.} 
Fixing an arbitrary real number $s > 0$, Point~2 implies 
$\tau_{\! s}({\rc{{\Epi{f}} \.}} \. ) \inc \ri{C} \cart \RR$, 
and hence $\rc{{\Epi{f}} \.} \inc \tau_{\! {-s}}(\ri{C} \cart \RR) = \:\! \ri{C} \cart \RR$. 

\medskip

\textsf{Point~3 $\imp$ Point~4.} 
Since we have ${\Epi{f} \inc {\rc{{\Epi{f}} \.}} \.}$, 
Point~3 implies ${\Epi{f} \inc \ri{C} \cart \RR}$, 
which gives ${C \inc {\ri{C}} \;\!\!}$ by applying the projection of $V \. \cart \RR$ onto $V \!\.$. 
\end{proof}

\bigskip

Combining Lemma~\ref{lem:epi-closed} and Lemma~\ref{lem:epi-strict} 
with all the properties established in Section~\ref{sec:preliminaries}, 
we are now able to prove the Main~Theorem. 

\bigskip

\begin{proof}[Proof of the Main~Theorem]~\\ 
We may assume that $C \.$ is not empty since this equivalence is obviously true otherwise. 

\medskip

\textasteriskcentered \ ($\imp$) 
Let ${\;\!\! (x , r) , (y , s) \in {\rc{{\Epi{f}} \.}} \;\!\!}$ with $\;\!\! (x , r) \neq (y , s) \.$, 
fix $t \in (0 , \;\!\! 1 \:\!\. ) \.$, and define 
$$
(a , \a) \. \as \;\!\! ( \:\!\. 1 \;\!\! - t) \. (x , r) \;\!\! + t (y , s) 
\ = \ \;\!\. 
( \. ( \:\!\. 1 \;\!\! - t) x + t y \ , \ ( \:\!\. 1 \;\!\! - t) r + t s) \ \in \ V \. \cart \RR~.
$$ 


By Lemma~\ref{lem:epi-closed}, we already have ${\;\!\! (x , r) , (y , s) \in {\Epi{f}} \.}$. 
Then, since $C \.$ and $\. f \.$ are both convex, 
${{\Epi{f}} \;\!\!}$ is convex by Proposition~\ref{prop:convex-epi}, 
which implies ${\;\!\! (a , \a) \in {\Epi{f}} \.}$. 

\smallskip

There are now two cases to be considered. 

\smallskip

\textbullet \ Case $x = y$ and $r < s$. 

Here we have $a = x = y$, which yields 
\[
f \. (a) \ = \ \:\! \:\!\! f \. (x) \ \leq \ \:\! r \:\! 
\ = \ \;\!\. 
( \:\!\. 1 \;\!\! - t) r + t r \:\! 
\ < \ ( \:\!\. 1 \;\!\! - t) r + t s \ = \ \a.
\] 

\textbullet \ Case $x \neq y$. 

By strict convexity of $\. f \!$, we have 
$\. f \. (a) < ( \:\!\. 1 \;\!\! - t) \:\!\! f \. (x) \;\!\! + t \:\!\! f \. (y) 
\leq ( \:\!\. 1 \;\!\! - t) r + t s = \a$. 

\smallskip

In both cases, we get ${\;\!\! (a , \a) \in {\ri{{\Epi{f}} \.}} \;\!\!}$ 
by Point~2 in Proposition~\ref{prop:epi-usc} since $\. f \.$ is upper semi-continuous at $a$. 

\smallskip

This proves that ${{\Epi{f}} \;\!\!}$ is strictly convex. 

\medskip

\textasteriskcentered \ ($\con$) 
First of all, $C \.$ is convex by Proposition~\ref{prop:convex-epi}. 
Moreover, $C \.$ is open in ${{\Aff{C}} \;\!\!}$ 
by the third implication in Lemma~\ref{lem:epi-strict}. 

\smallskip

On the other hand, given $x , y \in C \.$ with $x \neq y$ 
and $t \in (0 , \;\!\! 1 \:\!\. ) \.$, 
the points $\;\!\! (x , \. f \. (x) \. ) \;\!\!$ and $\;\!\! (y , \. f \. (y) \. ) \;\!\!$ 
are in ${\Epi{f} \inc {\rc{{\Epi{f}} \.}} \.}$, which yields 
\[
(a , \a) \. \as \;\!\! ( \. ( \:\!\. 1 \;\!\! - t) x + t y \ , \ 
( \:\!\. 1 \;\!\! - t) \:\!\! f \. (x) \;\!\! + t \:\!\! f \. (y) \. ) 
\ \in \ 
\ri{{\Epi{f}} \.}
\] 
since ${{\Epi{f}} \;\!\!}$ is strictly convex. 

\smallskip

Therefore, we get 
${\. f \. ( \. ( \:\!\. 1 \;\!\! - t) x + t y) = \:\! \:\!\! f \. (a) < \:\! 
\a \:\! = \;\!\. ( \:\!\. 1 \;\!\! - t) \:\!\! f \. (x) \;\!\! + t \:\!\! f \. (y) \;\!\!}$ 
by Point~1 in Proposition~\ref{prop:int-comp} with ${X \. \as C \.}$ and ${A \as {\Epi{f}} \;\!\!}$ 
by using ${\Aff{{\Epi{f}} \.} = \Aff{C} \cart \RR}$ (see Lemma~\ref{lem:rb-line-epi}) and by Point~1 
in Proposition~\ref{prop:epi-intr+clos} with ${X \. \as C \.}$. 
This proves that $\. f \.$ is strictly convex. 

\smallskip

Now, since ${{\Epi{f}} \;\!\!}$ is strictly convex and non-empty, 
we have $\ri{{\Epi{f}} \.} \neq \Oset \.$ by Proposition~\ref{prop:ri-strict-cvx}. 
On the other hand, since we have ${\Aff{{\Epi{f}} \.} = \Aff{C} \cart \RR}$ 
by Lemma~\ref{lem:rb-line-epi}, we can apply Proposition \ref{prop:loc-bounded} 
with ${X \. \as {\Aff{C}} \;\!\!}$ and $S \as C \.$, and then obtain that 
$\. f \.$ is bounded from above on a subset of $C \.$ which is non-empty and open in ${{\Aff{C}} \.}$. 
But this implies that $\. f \.$ is continuous 
by Proposition~21 in~\cite[Chapitre~II, page~20]{BouEVT81}. 

\smallskip

Finally, given ${x_{0} \in {\rb{C}} \.}$, we have 
$\;\!\! ( \. \{ x_{0} \} \. \cart \RR) \cap \rc{{\Epi{f}} \.} = \Oset \.$ 
by using the second implication in Lemma~\ref{lem:epi-strict}, 
and hence $\. f \. (x) \to \:\! {+\infty}$ as $x \to x_{0}$ by Proposition~\ref{prop:epi-infinity}. 
\end{proof}

%

\bigskip

\begin{proof}[Proof of Proposition~\ref{prop:sc-intr-epi}]~\\ 

\vspace{-18pt}

\textasteriskcentered \ ($\imp$) 
Since ${{\Epi{f}} \;\!\!}$ is not empty, the same holds for ${\intr{\wideparen{\Epi{f}}} \;\!\!}$ 
by Proposition~\ref{prop:ri-strict-cvx}. 

\medskip

\textasteriskcentered \ ($\con$) 
By Proposition~\ref{prop:loc-bounded} with ${X \. \as V \!}$ and ${S \as V \!\!}$, 
we get that $\. f \.$ is bounded from above on a non-empty open set in $V \!\!$, 
and hence it is continuous by Proposition~21 in~\cite[Chapitre~II, page~20]{BouEVT81}. 
Therefore, the Main~Theorem with $C \as V \!$ implies that ${{\Epi{f}} \;\!\!}$ is strictly convex. 
\end{proof}

\bigskip

\begin{proof}[Proof of Proposition~\ref{prop:epi-rest}]~\\ 
\textsf{Point~1 $\imp$ Point~2.} 
Given an affine subspace $G \.$ of $V \!\!$, the inclusion 
\[
{\Epi{\rest{f}{C \cap G \.}}} \;\!\! \ = \ \ \!\! \Epi{f} \cap (G \cart \RR) \ \inc \ \Epi{f}
\] 
yields 
\begin{eqnarray*} 
   \rc{{\Epi{\rest{f}{C \cap G \.}}} \:\!\!} 
   & = & \. 
   \clos{\, \Epi{\rest{f}{C \cap G \.}}} \cap (\Aff{C \cap G} \cart \RR) \\ 
   & \inc & \. 
   \clos{\, \Epi{\rest{f}{C \cap G \. }}} \cap (\Aff{C} \cart \RR) \\ 
   & \inc & \. 
   \clos{\, \Epi{f}} \cap (\Aff{C} \cart \RR) 
   \ = \ \:\! 
   {\rc{{\Epi{f}} \.}} \.~. 
\end{eqnarray*} 

\smallskip

Therefore, any two distinct points 
${\;\!\! (x , r) , (y , s) \in {\rc{{\Epi{\rest{f}{C \cap G \.}}} \:\!\!}} \.}$ 
are in ${\rc{{\Epi{f}} \.} \.}$, 
which implies that each ${\;\!\! (z , t) \in {](x , r) \, , \, (y , s)[} \;\!\!}$ 
is in ${{\ri{{\Epi{f}} \.}} \;\!\!}$ since ${{\Epi{f}} \;\!\!}$ is strictly convex. 

\smallskip

So, there exist $\e > 0$ and a neighborhood $\Om$ of $z$ in $V \!$ such that 
the set $\;\!\! (\Om \cap {\Aff{C}} \. ) \cart (t - \e \, , \, t + \e) \;\!\!$ 
is included in ${{\Epi{f}} \.}$, from which we get 
\begin{eqnarray*} 
   [\Om \cap {\Aff{C \cap G}} \. ] \cart (t - \e \, , \, t + \e) 
   & \inc & 
   (\Om \cap \Aff{C} \cap G) \cart (t - \e \, , \, t + \e) \\ 
   & = & 
   [ \. (\Om \cap {\Aff{C}} \. ) \cart (t - \e \, , \, t + \e) \. ] \cap (G \cart \RR) \\ 
   & \inc & 
   \Epi{f} \cap (G \cart \RR) 
   \ = \ 
   \Epi{\rest{f}{C \cap G \.}} 
\end{eqnarray*} 
since one has $\Aff{C \cap G} \inc \Aff{C} \cap G \.$ 
and ${\Epi{f} \cap (G \cart \RR) = {\Epi{\rest{f}{C \cap G \.}}} \.}$, 
proving that $\;\!\! (z , t) \;\!\!$ is in ${{\rc{{\Epi{\rest{f}{C \cap G \.}}} \:\!\!}} \.}$. 

\smallskip

This shows that ${{\Epi{\rest{f}{C \cap G \.}}} \.}$ is strictly convex. 

\medskip

\textsf{Point~2 $\imp$ Point~1.} 
This is clear. 

\medskip

\textsf{Point~2 $\imp$ Point~3.} 
This is straightforward. 
\end{proof}

\bigskip

Before proving Proposition~\ref{prop:epi-finite-dim}, we need two key lemmas. 

\bigskip

\begin{lemma} \label{lem:rb-line} 
   Given a convex subset $C \!$ of a topological real vector space $V \!\!$ 
   and a straight line $L \.$ in $V \!\!$, we have the implication 
   $$
   \left( 
   \begin{array}{c} 
      \vspace{5pt} 
      C \! \ \mbox{is open in} \ {\Aff{C}} \.~, \\ 
      \vspace{5pt} 
      C \cap L \ \neq \ \Oset \.~, \\ 
      \mbox{the subspace topology on} \ L \. \ \mbox{is Hausdorff} 
   \end{array} 
   \right) \! 
   \qquad \imp \qquad 
   \rb{C \cap L} \ = \ \:\! \rb{C} \cap L~.
   $$ 
\end{lemma}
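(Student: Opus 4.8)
The plan is to split the claimed equality into its two inclusions. The inclusion $\rb{C \cap L} \inc \rb{C} \cap L$ costs nothing: it is Point~3.b of Proposition~\ref{prop:ri-rc-rb} applied with $A \as C$ and $W \as L$, a straight line being in particular an affine subspace, and it uses neither the convexity of $C$, nor its openness in $\Aff{C}$, nor the remaining two hypotheses. Hence the whole content is the reverse inclusion $\rb{C} \cap L \inc \rb{C \cap L}$, and I would reduce this to the classical fact that, for a convex set, the half-open segment joining an interior point to an adherent point stays in the interior (Proposition~16 in \cite[Chapitre~II, page~15]{BouEVT81}).

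First I would fix an arbitrary point $x \in \rb{C} \cap L$ (if there is none, there is nothing to prove) and aim to show $x \in \rc{C \cap L} \setmin \ri{C \cap L}$, which is exactly $x \in \rb{C \cap L}$ by Definition~\ref{def:ri+rc+rb}. Since $C$ is open in $\Aff{C}$ we have $C = \ri{C}$, so $x \in \rc{C} \setmin C$; in particular $x \notin C \cap L$, and since $\ri{C \cap L} \inc C \cap L$ this already gives $x \notin \ri{C \cap L}$. It remains to prove $x \in \rc{C \cap L} = \clos{C \cap L} \cap \Aff{C \cap L}$, and this is where the hypothesis $C \cap L \neq \Oset$ is used: I would pick a point $p \in C \cap L$.

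Now $p \in \ri{C}$ and $x \in \rc{C}$. Translating by $p$ so that $\Aff{C}$ becomes a linear subspace of $V$ carrying its subspace topology (a topological real vector space in which $\ri{C}$ and $\rc{C}$ are the ordinary interior and closure of $C$), Proposition~16 in \cite[Chapitre~II, page~15]{BouEVT81} yields $[p , x[ \inc \ri{C} = C$. As $p , x \in L$ and $L$ is affine we also have $[p , x[ \inc L$, hence $[p , x[ \inc C \cap L$. Note that $p \neq x$, because $p \in C$ while $x \notin C$; thus $[p , x[$ contains two distinct points of the line $L$, so $\Aff{C \cap L} \cni \Aff{[p , x[} = L$, and since $C \cap L \inc L$ this forces $\Aff{C \cap L} = L$, in particular $x \in \Aff{C \cap L}$. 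Finally, the map $t \longmapsto (1 - t) p + t x$ from $\RR$ to $V$ is continuous (the vector space operations being continuous), takes the value $x$ at $t = 1$, and maps $[0 , 1[$ into $[p , x[ \inc C \cap L$; hence every neighbourhood of $x$ meets $C \cap L$, i.e. $x \in \clos{C \cap L}$. Combining the last two facts gives $x \in \rc{C \cap L}$, and with $x \notin \ri{C \cap L}$ this is precisely $x \in \rb{C \cap L}$.

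The main obstacle, as I see it, is not any single computation but the two passages to the relative topology: obtaining the segment property relative to $\Aff{C}$ rather than to $V$ (handled by the translation above) and verifying that $x$ really lies in $\Aff{C \cap L}$ (handled by noting that the non-trivial segment $[p , x[$ already spans the line $L$). Once these are settled the rest is routine. The Hausdorff hypothesis on $L$ would offer an alternative route — by Point~2 of Remark~\ref{rem:0-tvs} it lets one identify $L$ with $\RR$ and argue directly with intervals — but the argument above does not seem to require it.
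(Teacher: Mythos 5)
Your proof is correct, and for the substantive inclusion $\rb{C} \cap L \inc \rb{C \cap L}$ it follows the same basic route as the paper: the easy inclusion is Point~3.b of Proposition~\ref{prop:ri-rc-rb}, and for the converse one picks $p \in C \cap L$, uses $C = \ri{C}$ together with Proposition~16 of \cite[Chapitre~II, page~15]{BouEVT81} to get $[p , x[ \, \inc C \cap L$, and deduces $\Aff{C \cap L} = L$. Where you genuinely diverge is in the endgame. The paper first shows that $C \cap L$ is open in $\Aff{C \cap L} = L$ and then obtains $x \in \rc{C \cap L}$ by arguing that $t \longmapsto (1-t) p + t x$ is a \emph{homeomorphism} of $\RR$ onto $L$ --- this is precisely where the Hausdorff hypothesis on $L$ enters, via Theorem~2 of \cite[Chapitre~I, page~14]{BouEVT81}. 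You instead observe that mere \emph{continuity} of this parametrization (valid in any topological real vector space) already forces every neighbourhood of $x$ to meet $[p , x[ \, \inc C \cap L$, hence $x \in \clos{C \cap L} \cap \Aff{C \cap L} = \rc{C \cap L}$, while $x \notin \ri{C \cap L}$ is immediate from $x \notin C$; so the openness of $C \cap L$ in $L$ is never needed. Your argument is therefore both shorter and slightly more general: it proves the lemma without the third hypothesis, confirming your closing remark that the Hausdorff assumption is superfluous. This costs the paper nothing, since the lemma is only invoked with $V = \Rn{n}$, where every line is automatically Hausdorff, but your version is the cleaner statement.
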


\smallskip

\begin{proof}~\\ 
Assume that all the hypotheses are satisfied. 

\medskip

\textasteriskcentered \ 
Using Point~3.b in Proposition~\ref{prop:ri-rc-rb} with ${A \as C \.}$ and ${W \. \as L}$, 
we immediately have the inclusion $\rb{C \cap L} \inc \rb{C} \cap L$. 

\medskip

\textasteriskcentered \ 
Now, fix a point ${b \in C \cap L \inc \:\! C = \:\! {\ri{C}} \;\!\!}$ 
and let $a$ be an arbitrary point in $\rb{C} \cap L$. 

\smallskip

Since we have ${a \in {\rc{C}} \.}$, Proposition~16 in~\cite[Chapitre~II, page~15]{BouEVT81} 
implies $\;\!\! {]a , b[} \inc \ri{C} = \:\! C \.$, which yields $\;\!\! {]a , b[} \inc \:\! C \cap L$ 
since $a$ and $b$ lie in the convex set $L$. 

\smallskip

The points $a$ and $b$ being distinct, we obtain 
$L = \Aff{]a , b[} \inc \Aff{C \cap L} \inc \Aff{L} = L$, and hence $\Aff{C \cap L} = L$. 

\smallskip

Therefore, since we have ${L = \Aff{C \cap L} \inc {\Aff{C}} \;\!\!}$ 
and since $C \.$ is open in ${{\Aff{C}} \.}$, we then deduce that $C \cap L$ is open in $L$, 
which is equivalent to saying that $C \cap L$ is open in ${{\Aff{C \cap L}} \;\!\!}$ 
by using again $\Aff{C \cap L} = L$. 

\smallskip

Conclusion: we get $\rb{C \cap L} = \:\! {\rc{C \cap L}} \. \setmin (C \cap L) \.$. 

\smallskip

On the other hand, the inclusion ${\;\!\! {]a , b[} \inc \:\! C \cap L}$ we established above 
yields ${\rc{]a , b[} \inc {\rc{C \cap L}} \;\!\!}$ by Point~1 in Proposition~\ref{prop:ri-rc-rb}, 
which writes ${\;\!\! [a , b] \inc {\rc{C \cap L}} \;\!\!}$ since we have $\Aff{]a , b[} = L$ 
together with ${\!\. \clos{\, ]0 , \;\!\! 1 \:\!\. [} = \;\!\. [0 , \;\!\! 1 \:\!\. ] \;\!\!}$ 
and since the map $\c : \RR \to L$ defined by ${\c(t) \. \as a + t (b - a) \;\!\!}$ is a homeomorphism 
as a consequence of Theorem~2 in~\cite[Chapitre~I, page~14]{BouEVT81} 
and the fact that $L$ is a finite-dimensional affine space whose topology is Hausdorff. 

\smallskip

So, we obtain in particular ${a \in {\rc{C \cap L}} \.}$, 
and hence $a \in \rb{C \cap L} = {\rc{C \cap L}} \. \setmin (C \cap L) \;\!\!$ 
since one has $a \in V \setmin C \inc V \setmin (C \cap L) \.$. 
\end{proof}

\bigskip

\begin{lemma} \label{lem:line-infinity} 
   Given a convex subset $C \!$ of $\Rn{n} \.$, a convex function $\. f \. : C \to \RR$ 
   and a point $a \in \rb{C} \inc \:\! \clos{C} \!$, we have the equivalence 
   $$
   \big( f \. (x) \ \to \ \:\! {+\infty} \quad \mbox{as} \quad x \ \to \ a \big) 
   \ \iff \ \;\!\! 
   \left( 
   \begin{array}{c} 
      \vspace{5pt} 
      \mbox{for any straight line} \ L \. \ \mbox{in} \ \Rn{n} \. \ \mbox{passing through} \ a, \\ 
      f \. (x) \ \to \ \:\! {+\infty} \quad \mbox{as} \quad x \ \to \ a 
      \ \mbox{with} \ x \in C \cap L 
   \end{array} 
   \right) \!\!\!~.
   $$ 
\end{lemma}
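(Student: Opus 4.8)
The plan is to establish the non-trivial implication ($\Leftarrow$) by contraposition; the direct implication ($\Rightarrow$) requires nothing, since for any straight line $L \.$ through $a$ one has $C \cap L \inc C \.$, so if $f \. (x) \to {+\infty}$ as $x \to a$ with $x \in C \.$, the same holds a fortiori with $x \in C \cap L \.$.

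For the converse, suppose $f \.$ does \emph{not} tend to ${+\infty}$ as $x \to a$ inside $C \.$. Since $a \in \rb{C} \inc \clos{C} \.$, this produces a real number $M_{0}$ and a sequence $\seq{x_{k}}{k}{1}$ in $C \.$ with $x_{k} \to a$ and $f \. (x_{k}) \leq M_{0}$ for every $k \.$. Note that $C \.$ is non-empty (otherwise $\rb{C} = \Oset \.$), hence, being a non-empty convex subset of $\Rn{n} \.$, has a non-empty relative interior. I would fix $b \in \ri{C} \.$; then $b \neq a$ because $\rb{C} \cap \ri{C} = \Oset \.$. Setting $M \as \max(M_{0} , f \. (b))$ and $K \as \{ x \in C \st f \. (x) \leq M \} \.$, the crucial point is that $K$ is convex (sublevel set of the convex function $f \.$) and contains $b$ and every $x_{k} \.$; therefore $\clos{K}$ is convex and contains both $b$ and $a = \lim_{k} x_{k} \.$, so the whole segment $[b , a] \.$ lies in $\clos{K} \.$.

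Finally, I would take $L \as \Aff{\{ a , b \}} \.$ and show that $f \. \leq M$ on the half-open segment $[b , a) \inc C \cap L \.$. Indeed, Proposition~16 in~\cite[Chapitre~II, page~15]{BouEVT81}, applied with $b \in \ri{C}$ and $a \in \rc{C} \.$, gives ${]a , b[} \inc \ri{C} \.$, hence $[b , a) \inc \ri{C} \.$; and since a convex function on a convex subset of $\Rn{n}$ is continuous at every point of the relative interior of its domain (this follows from Point~4 of Remark~\ref{rem:sc}, which yields local boundedness from above near such a point, together with Proposition~21 in~\cite[Chapitre~II, page~20]{BouEVT81}), continuity of $f \.$ at each $x \in [b , a) \.$ turns the inclusion $x \in \clos{K}$ into $f \. (x) \leq M \.$. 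Since $a$ lies in the closure of $[b , a) \.$, this shows that $f \.$ does not tend to ${+\infty}$ as $x \to a$ with $x \in C \cap L \.$, so the right-hand condition of the equivalence fails for this particular line $L \.$, which is exactly what the contrapositive requires.

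The heart of the argument, and the step to get right, is the choice of $b$ in the \emph{relative interior} of $C \.$: convexity of the sublevel set $K$ automatically transports the bound from the sequence to a whole segment ending at $a \.$, but such a bound is useful only if that segment runs through a region where $f \.$ is forced to be continuous, and $b \in \ri{C}$ is precisely what guarantees this (for a boundary point $b$ the segment $[b , a) \.$ could stay inside $\rb{C} \.$, where $f \.$ might itself blow up at $a \.$). The auxiliary facts invoked — non-emptiness of $\ri{C}$ for a non-empty convex subset of $\Rn{n} \.$, and continuity of convex functions on the relative interior of their domain — are standard and are essentially already available from Section~\ref{sec:preliminaries}.
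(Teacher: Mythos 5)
Your proof is correct, and it takes a genuinely different route from the paper's. The paper also argues the converse by contraposition and also ends by exhibiting a segment ending at $a$ on which $f$ stays bounded, but it transports the bound by a purely affine mechanism: it first arranges the bounded sequence $(x_k)$ to lie in $\ri{C}$, sets $X=\{x_k \st k\in\NN\}$, uses Jensen's inequality to get $f\le M$ on $\Conv{X}$, and then chooses $b\in\ri{{\Conv{X}}\.}$, so that ${]a,b[}\subseteq\ri{{\Conv{X}}\.}\subseteq\Conv{X}$ and the bound on the segment is immediate --- no continuity of $f$ is invoked anywhere (the inclusion $\Conv{X}\subseteq\ri{C}$ is what forces $b\neq a$ there). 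You instead fix $b\in\ri{C}$ directly (which gives $b\neq a$ for free), pass to the convex sublevel set $K$ and its closure to obtain $[b,a]\subseteq\clos{K}$, and then pay for this shortcut by invoking the continuity of a finite-dimensional convex function on the relative interior of its domain to convert $x\in\clos{K}$ into $f(x)\le M$. That continuity step is genuinely necessary in your version (a convex function may exceed $M$ at a point of $\clos{K}\cap C$ lying on the relative boundary, so restricting to $[b,a)\subseteq\ri{C}$ is essential, as you note); the one micro-gap is that a sequence $z_j\in K$ converging to $x\in\ri{C}$ must first be observed to enter $\ri{C}$ eventually --- because $\ri{C}$ is open in $\Aff{C}$ --- before continuity of $f$ on $\ri{C}$ applies, but that is immediate. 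In summary, your argument leans on heavier machinery (the continuity theorem, which the paper does use elsewhere, e.g.\ in the proof of Proposition~\ref{prop:epi-finite-dim}), whereas the paper's stays at the level of Jensen's inequality; in exchange, your direct choice of $b\in\ri{C}$ sidesteps the paper's somewhat delicate preliminary step of pushing the bounded sequence into $\ri{C}$.
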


\smallskip

\begin{proof}~\\ 
\textasteriskcentered \ ($\imp$) 
This implication is obvious. 

\medskip

\textasteriskcentered \ ($\con$) 
Assume that we have $\. f \. (x) \ \not \!\! \to \:\! {+\infty}$ as $x \to a$. 

\smallskip

Since we can write ${\rc{C} = \rc{{\ri{C}} \.} \;\!\!}$ 
by Corollary~1 in~\cite[Chapitre~II, bottom of page~15]{BouEVT81} 
and since we have ${a \in {\rc{C}} \.}$, this means that there exist 
a sequence ${\;\!\! \seqN{x}{k}}$ in ${{\ri{C}} \;\!\!}$ that converges to $a$ 
and a number $M > 0$ that satisfies $\. f \. (x_{k}) \leq \:\! M$ for any $k \in \NN$. 

\smallskip

Therefore, if we consider the set ${X \!\. \as \:\!\! \{x_{k} \st k \in \NN\} \.}$, 
one obtains $\. f \. (x) \leq \:\! M$ for any ${x \in {\Conv{X}} \;\!\!}$ 
according to Point~4 in Remark~\ref{rem:sc}. 

\smallskip

Now, noticing that $a$ lies in $\clos{X} \.$, 
we get ${a \in \rc{{\Conv{X}} \.} = \:\!\! \clos{\, \Conv{X}} \cap {\Aff{X}} \;\!\!}$ 
since we have $\clos{X} \inc \. \clos{\, \Conv{X}} \;\!\!$ 
and since ${{\Aff{X}} \;\!\!}$ is closed in $\Rn{n}$. 

\smallskip

On the other hand, since ${{\Conv{X}} \;\!\!}$ is not empty, 
the same holds for ${{\ri{{\Conv{X}} \.}} \.}$, 
which insures the existence of a point ${b \in {\ri{{\Conv{X}} \.}} \.}$. 

\smallskip

Proposition~16 in~\cite[Chapitre~II, page~15]{BouEVT81} 
then implies ${\;\!\! {]a , b[} \inc {\ri{{\Conv{X}} \.}} \.}$. 

\smallskip

Moreover, since ${{\ri{C}} \;\!\!}$ is convex 
by Corollary~1 in~\cite[Chapitre~II, bottom of page~15]{BouEVT81}, 
we have ${\ri{{\Conv{X}} \.} \inc \Conv{X} \inc \Conv{{\ri{C}} \.} = {\ri{C}} \.}$, 
and hence $a$ does not belong to ${{\ri{{\Conv{X}} \.}} \.}$ since we have ${a \not \in {\ri{C}} \.}$, 
which yields $b \neq a$. 

\smallskip

Finally, if $L$ denotes the straight line in $\Rn{n}$ passing through $a$ and $b$, 
we do not have ${\. f \. (x) \to \:\! {+\infty}}$ as $x \to a$ with $x \in C \cap L$ 
since the inequality $\. f \. (x) \leq \:\! M$ holds for any 
$x \in {]a , b[} \inc \ri{{\Conv{X}} \.} \cap L \inc \ri{C} \cap L \inc \:\! C \cap L$. 
\end{proof}

\bigskip

\begin{proof}[Proof of Proposition~\ref{prop:epi-finite-dim}]~\\ 
Since the equivalence is obvious when $C \.$ is empty or reduced to a single point, 
we may assume that $C \.$ has at least two distinct points. 

\medskip

\textsf{Point~1 $\imp$ Point~2.} 
This implication has already been proved in Proposition~\ref{prop:epi-rest}. 

\medskip

\textsf{Point~2 $\imp$ Point~1.} \\ 
\textasteriskcentered \ 
First of all, the intersection of $C \.$ with any straight line $L$ in $\Rn{n}$ 
is convex by applying the converse part of the Main~Theorem to the function $\. \rest{f}{C \cap L}$. 
Hence, $C \.$ is convex. 

\smallskip

\textasteriskcentered \ 
Let us then show that $C \.$ is open in ${{\Aff{C}} \.}$. 

\smallskip

Fix $x_{0} \in C \.$, 
and consider the subset ${G \. \as \:\!\! \{ x - x_{0} \st x \in C \} \.}$ of $V \!\.$. 

\smallskip

Then the vector subspace ${W \!\. \as \:\!\! \{ v - x_{0} \st v \in {\Aff{C}} \. \} \.}$ 
of $V \!$ is generated by $G \. $ since $C \.$ is a generating set 
of the affine space ${{\Aff{C}} \.}$. 

\smallskip

Hence, there exists a subset $B$ of $G \.$ which is a basis of $W \!\.$. 

\smallskip

Denoting by ${d \in \. \{ \:\!\. 1 \:\!\. , \ldots , n \} \.}$ 
the dimension of the affine subspace ${{\Aff{C}} \;\!\!}$ of $\Rn{n}$, we have $\dim{W} = \:\! d$, 
and hence there are vectors $\llist{x}{1}{d} \in C \.$ such that we can write 
${B = \;\!\! \{ x_{i} \. - x_{0} \st \:\!\. 1 \;\!\! \leq i \leq d \} \.}$. 

\smallskip

Now, for each ${i \in \. \{ \:\!\. 1 \:\!\. , \ldots , d \} \.}$, 
if $L_{i \.}$ denotes the straight line in $\Rn{n}$ passing through $x_{0}$ and $x_{i}$, 
the strict convexity of ${{\Epi{\rest{f}{C \cap L_{i} \.}}} \.}$ implies that $C \cap L_{i \.}$ 
is open in ${\Aff{C \cap L_{i}} = L_{i} = {\Aff{\. \{ x_{0} , x_{i} \} \.}} \;\!\!}$ 
by Lemma~\ref{lem:epi-strict}, which insures the existence of a number $r_{i} \. > 0$ such that 
the vector $v_{i} \. \as r_{i} (x_{i} \. - x_{0}) \;\!\!$ satisfies 
\[
\{ x_{0} - v_{i} \, , \, x_{0} + v_{i} \} \inc \:\! C \cap L_{i} \inc \:\! C \.~.
\] 


Therefore, the convex hull $U \!$ 
of ${\disp \bigcup_{i = \. 1}^{d} \{ x_{0} - v_{i} \, , \, x_{0} + v_{i} \} \.}$ 
lies in the convex set $C \.$. 

\smallskip

Since the map $\. f \. : \Rn{d} \to \Rn{n}$ defined by 
${\disp \. f \. \lvec{\l}{1}{d} \. \as \!\. \sum_{i = \. 1}^{d} \l_{i} v_{i \.}}$ is linear 
and sends the canonical ordered basis $\lvec{e}{1}{d}$ of $\Rn{d}$ 
to the family $\lvec{v}{1}{d}$ of $\Rn{n}$, 
we can write ${U \:\!\! = \:\! x_{0} + \:\!\! f \. (\Om) \.}$, where 
${\Om \. \as \:\!\! \{ \lvec{\l}{1}{d} \in \Rn{d} 
\st \! |\l_{1}| \;\!\! + \cdots + \;\!\! |\l_{d}| \leq \:\! \;\!\! 1 \:\!\. \} \.}$ 
is the convex hull of $\disp \bigcup_{i = \. 1}^{d} \{ \. {-e_{i}} \, , \, e_{i} \} \.$. 

\smallskip

But $\lvec{v}{1}{d}$ is a basis of $W \!$ since $B$ is, 
which implies that $\. f \.$ satisfies ${\Im{f} = W \!}$ and is a linear isomorphism onto its image. 
Therefore, $x_{0} + \:\!\! f \.$ is a homeomorphism from $\Rn{d}$ (endowed with its usual topology) 
onto ${x_{0} + W \:\!\! = {\Aff{C}} \.}$. 

\smallskip

As a consequence, we then get that $U \!$ is a neighbourhood of $x_0$ in ${{\Aff{C}} \;\!\!}$ 
since $\Om$ is a neighborhood of the origin in $\Rn{d}$, 
and hence $C \.$ is itself a neighborhood of $x_0$ in ${{\Aff{C}} \.}$. 

\smallskip

\textasteriskcentered \ 
On the other hand, $\. f \.$ is strictly convex since its restriction to any straight line 
in $\Rn{n}$ is strictly convex and since strict convexity is an affine notion. 

\smallskip

\textasteriskcentered \ 
Moreover, the convexity of $\. f \.$ on the open convex subset $C \.$ 
of the finite-dimensional affine space ${{\Aff{C}} \;\!\!}$ equipped with the topology induced 
from that of $\Rn{n}$ implies that $\. f \.$ is continuous 
according to the corollary given in~\cite[Chapitre~II, page~20]{BouEVT81}. 

\smallskip

\textasteriskcentered \ 
Finally, given any point ${a \in {\rb{C}} \;\!\!}$ 
and any straight line $L$ in $\Rn{n}$ passing through $a$, we have either $C \cap L = \Oset \.$, 
which obviously yields $\disp \lim_{x \goes a} \. f \. (x) = \:\! {+\infty}$ 
with $x \in C \cap L$, or $C \cap L \neq \Oset \.$, 
which first implies ${a \in {\rb{C \cap L}} \;\!\!}$ by Lemma~\ref{lem:rb-line}, 
and then $\disp \lim_{x \goes a} \. f \. (x) = \:\! {+\infty}$ with $x \in C \cap L$ 
by the Main~Theorem since ${{\Epi{\rest{f}{C \cap L \.}}} \.}$ is strictly convex. 

\smallskip

In both cases, we obtain $\disp \lim_{x \goes a} \. f \. (x) = \:\! {+\infty}$ 
by Lemma~\ref{lem:line-infinity}. 
\end{proof}





\bigskip
\bigskip
\bigskip


\bibliographystyle{acm}
\bibliography{math-biblio-convexity}

\end{document}